\documentclass[10pt]{amsart}
\usepackage[pdftex]{graphicx}
\parskip=0.5ex plus0.16667ex minus0.08333ex

\address[finka@math.berkeley.edu]{Alex Fink, Department of Mathematics,
University of California, Berkeley CA, USA 94720}
\address[b-iriart@uniandes.edu.co]{Benjamin Iriarte Giraldo, 
Departamento de Matem\'aticas,
Universidad de los Andes, Bogot\'a, Colombia}

\numberwithin{equation}{section}

\usepackage{graphicx}

\newtheorem{theorem}{Theorem}[section]
\newtheorem{lemma}[theorem]{Lemma}
\newtheorem{proposition}[theorem]{Proposition}
\newtheorem{corollary}[theorem]{Corollary}

\theoremstyle{definition}
\newtheorem{definition}[theorem]{Definition}
\newtheorem{example}[theorem]{Example}

\theoremstyle{remark}

\newcommand{\Fix}{\mathord{\rm Fix}}
\newcommand{\Part}{\mathord{\rm Part}}
\newcommand{\type}{\mathord{\rm type}}
\newcommand{\id}{\mathord{\rm id}}
\newcommand{\sgn}{\mathop{\rm sgn}}

\newcommand{\mc}{\mathcal}
\newcommand{\lel}{\mathrel{<_{\rm lp}}}
\newcommand{\NC}{^{\rm NC}} 
\newcommand{\NN}{^{\rm NN}}

\title[A bijection between NC and NN partitions]
{A bijection between noncrossing and nonnesting partitions
for classical reflection groups}

\author[Alex Fink and Benjamin Iriarte]{Alex Fink
and Benjamin Iriarte Giraldo}
\thanks{The authors would
like to thank Federico Ardila
and the SFSU-Colombia initiative for their
support in this research.}

\begin{document}
\maketitle

\begin{abstract}
We present an elementary type preserving bijection
between noncrossing and nonnesting partitions for all classical 
reflection groups, answering a question of Athanasiadis.
\end{abstract}

\section{Introduction and background}
The Coxeter-Catalan combinatorics is an active field of study in the 
theory of Coxeter groups.  Several diverse and independently motivated 
sets of objects associated to a Coxeter group $W$ have the cardinality
$\prod_{i=1}^r (h+d_i)/d_i$, where $h$ is the Coxeter number of~$W$
and $d_1,\ldots,d_r$ its degrees.  At the core of the Coxeter-Catalan
combinatorics are the problems of explaining these equalities of cardinalities.
Two of the sets of objects involved are
\begin{itemize}
\item the {\em noncrossing partitions} $NC(W)$, which in their classical
(type~$A$) avatar are a long-studied combinatorial object harking back
at least to Kreweras~\cite{Kreweras}, and in their generalisation to
arbitrary Coxeter groups are due to Bessis and Brady and~Watt
\cite{Bessis}, \cite{BW}; and
\item the {\em nonnesting partitions} $NN(W)$, introduced
by Postnikov~\cite{Postnikov} for all the classical reflection groups simultaneously.
\end{itemize}

Athanasiadis in~\cite{AthNCNN} proved in a case-by-case fashion that
$|NN(W)| = |NC(W)|$ for the classical reflection groups $W$, and asked
for a bijective proof.  This was later improved by 
Athanasiadis and Reiner~\cite{AthRein} to a proof for all
Weyl groups, cited as Theorem~\ref{th:AR} below.
This proof showed that nonnesting and noncrossing partitions
are equidistributed by {\em type}, a statistic for partitions
defined in~\label{def:type}; but it handled the classical 
reflection groups in a nonuniform case-by-case fashion, 
and was not bijective for the exceptional groups.

Our contribution has been to provide a bijection
which, given particular fixed choices of coordinates in the representation, 
works uniformly for the classical reflection groups.
Our proof also provides equidistribution by type.
The cases of our bijection for types $B$, $C$, and~$D$ 
have not appeared before in the literature.  
The ultimate goal in connecting $NN(W)$ and $NC(W)$,
a case-free bijective proof for all Weyl groups, remains open.
The special nature of our choices of coordinates enables
the construction of bump diagrams, and the present lack of a notion of 
bump diagrams for the exceptional groups would seem to be a significant 
obstacle to extending our approach.  

Two other papers presenting combinatorial bijections between noncrossing and
nonnesting partitions independent of this one, 
one by Stump~\cite{Stump} and by Mamede~\cite{Mamede},
appeared essentially simultaneously to it.
Both of these limit themselves to types $A$ and~$B$, 
and our approach is also distinct to them in its type preservation and
in providing additional statistics characterising the new bijections.
More recently Conflitti and Mamede~\cite{CM} 
have presented a bijection in type~$D$ which preserves
different statistics to ours (namely 
{\em openers}, {\em closers}, and {\em transients}).  

In the remainder of this section we lay out the definitions of
the objects involved: in~\S\ref{ssec:uniform}, the uniform definitions of 
nonnesting and noncrossing partitions; in~\S\ref{ssec:classical}, 
a mode of extracting actual partitions from these definitions which
our bijections rely upon; in~\S\ref{ssec:classical NC and NN}, 
the resulting notions for classical reflection groups.
In section~\ref{sec:typepres} we present a type-preserving bijection 
between noncrossing and nonnesting partitions that works for all the classical 
reflection groups. We prove our bijection in a case by case fashion
for each classical type, unpacking and specializing
the definition to a more concrete bijection in each type in turn. 

\subsection{Uniform noncrossing and nonnesting partitions}
\label{ssec:uniform}
For noncrossing partitions we 
follow Armstrong~\cite[\S2.4--6]{Armstrong}.  The treatment of
nonnesting partitions is due to Postnikov~\cite{Postnikov}.

Let $(W,S)$ be a finite Coxeter system of rank~$r$, so that
$S = \{s_1,s_2,\ldots,s_r\}$ generates the group 
$$W = \langle s_1,\ldots,s_r : s_i^2=(s_is_j)^{m_{ij}} = 1\rangle.$$
We will always take the $m_{ij}$ finite.
Let $T = \{s^w:s\in S,w\in W\}$ be the set of all reflections of~$W$,
where $s^w = w^{-1}sw$ denotes conjugation.
Let $[r]=\{1,\ldots,r\}$.  
Consider the $\mathbb R$-vector space 
$V={\rm span}_{\mathbb R}\{\alpha_i : i\in[r]\}$ 
endowed with the
inner product $\langle{\cdot},{\cdot}\rangle$ for which
$\langle\alpha_i,\alpha_j\rangle = -\cos(\pi/m_{ij})$,
and let $\rho:W\to{\rm Aut}(V)$ be the geometric representation of $W$.
This is a faithful representation of $W$, by which it acts
isometrically on~$V$ with respect to $\langle{\cdot},{\cdot}\rangle$.

The set $NC(W)$ of (uniform) noncrossing partitions of~$W$
is defined as an interval of the absolute order.

\begin{definition}
The {\em absolute order} ${\mathrm Abs}(W)$ 
of~$W$ is the partial order on~$W$
such that for $w,x\in W$,  $w\leq x$ if and only if
$$l_T(x) = l_T(w) + l_T(w^{-1}x),$$
where $l_T(w)$ is the minimum length of any
expression for~$w$ as a product of elements of~$T$.  
A word for~$w$ in~$T$ of length $l_T(w)$ will
be called a {\em reduced $T$-word} for~$w$.  
\end{definition}

The absolute order is a poset graded by~$l_T$,
with unique minimal element $1\in W$.  
It has several distinguished maximal elements:

\begin{definition}
A \emph{standard Coxeter element} of $(W,S)$ is any 
element of the form $ c = s_{\sigma(1)}s_{\sigma(2)}\ldots s_{\sigma(r)} $, 
where $ \sigma $ is a permutation of the set $[r]$. 
A \emph{Coxeter element} is any conjugate of a standard Coxeter 
element in~$W$.
\end{definition}

All Coxeter elements have maximal rank in ${\mathrm Abs}(W)$.  

\begin{definition}
Relative to any Coxeter element $ c $, the 
poset of ({\em uniform})
{\em noncrossing partitions} is the interval $ NC(W,c) = [1, c ] $
in the absolute order.
\end{definition}

Although this definition appears to depend 
on the choice of Coxeter element $ c $, the intervals $[1,c]$ 
are isomorphic as posets 
for all $c$~\cite[Definition~2.6.7]{Armstrong}.   
So we are free to use the notation $NC(W)$ for the poset
of noncrossing partitions of~$W$ with respect to any $c$. 

Now assume $W$ is a Weyl group.  
The set $NN(W)$ of nonnesting partitions
is defined in terms of the root poset.
\begin{definition}\label{def:root poset}
The {\em root poset} of~$W$ is its set of positive roots $\Phi^+$ 
with the partial order $\leq$ under which, for $\beta,\gamma\in\Phi^+$,
$\beta\leq\gamma$ if and only if $\gamma-\beta$ lies in the
positive real span of the simple roots.  
\end{definition}
This definition of the root poset is distinct from,
and more suited for connections to nonnesting partitions than, 
the one given in Bj\"orner \& Brenti~\cite{BB}, 
which does not require the Weil group condition, and which in fact
is a strictly weaker order than Definition~\ref{def:root poset}.  

\begin{definition}
A ({\em uniform}) {\em nonnesting partition} 
for~$W$ is an antichain in the root poset of~$W$.  We denote
the set of nonnesting partitions of $W$ by~$NN(W)$. 
\end{definition}

To each root $\alpha$ we have an orthogonal hyperplane $\alpha^\perp$
with respect to $\langle{\cdot},{\cdot}\rangle$, 
and these define a hyperplane arrangement and a poset of intersections.

\begin{definition}
The {\em partition lattice} $\Pi(W)$ of~$W$ is the
intersection poset of reflecting hyperplanes
$$\{\bigcap_{\alpha\in S} \alpha^\perp : S\subseteq\Phi^+\}.$$
\end{definition}

Note that $\Pi(W)$ includes the empty intersection $V$, when $S=\emptyset$.  

Now let $W$ be a {\em classical reflection group}, i.e.\
one of the groups $A_r$, $B_r$, $C_r$ or~$D_r$
in the Cartan-Killing classification.

Each classical reflection group has a standard choice of coordinates
which we will use throughout,
that is an isometric inclusion of~$V$ into 
a Euclidean space $\mathbb R^n$ bearing its usual inner product,
not necessarily an isomorphism.
This yields a faithful isometric representation
$\rho^{\rm cl}:W\to{\rm Aut}(\mathbb R^n)$ of~$W$,
the superscript cl standing for ``classical''.
In Section 2.10 of~\cite{Humphreys},
a standard choice of simple roots is presented in the standard coordinates;
our simple roots, in~\eqref{eq:sys}, are identical except that
we've reversed the indexing, swapping $e_1, e_2, \ldots, e_r$ for
$e_r, e_{r-1},\ldots,e_1$.  
\begin{align}\label{eq:sys}
\Delta_{A_r} &= \left\{e_2-e_1,e_3-e_2,\dots, e_{r+1}-e_r\right\}  \\
\Delta_{B_r} &= \left\{e_1,e_2-e_1,e_3-e_2,\dots, e_r-e_{r-1}\right\}  \nonumber \\
\Delta_{C_r} &= \left\{2e_1,e_2-e_1,e_3-e_2,\dots, e_r-e_{r-1}\right\} \nonumber \\
\Delta_{D_r} &= \left\{e_1+e_2,e_2-e_1,e_3-e_2,\dots, e_r-e_{r-1}\right\} \nonumber
\end{align}
We will reserve $n$ for the dimensions of the particular coordinatisations
presented here, writing $r$ when we mean the rank of~$W$.  
Hence $n=r+1$ when $W=A_r$, but $n=r$ when $W$ is $B_r$ or~$C_r$ or~$D_r$. 
We will use the names $A_{n-1}$, $B_n$, $C_n$, $D_n$ henceforth.

Figure~\ref{fig:rootposets} exhibits the root posets of the classical reflection groups.
We annotate the lower verges of the root posets
with a line of integers, which for reasons of space we bend around the left side.  
Given a dot in Figure~\ref{fig:rootposets}, if $i$ and $j$ are the integers 
in line with it on downward rays of slope 1 and $-1$ respectively, 
then it represents the root $\alpha=e_j-e_i$, where $e_{-k}=-e_k$ for $k<0$ 
and $e_0=0$. 

\begin{figure}[ht]
\centering
\includegraphics{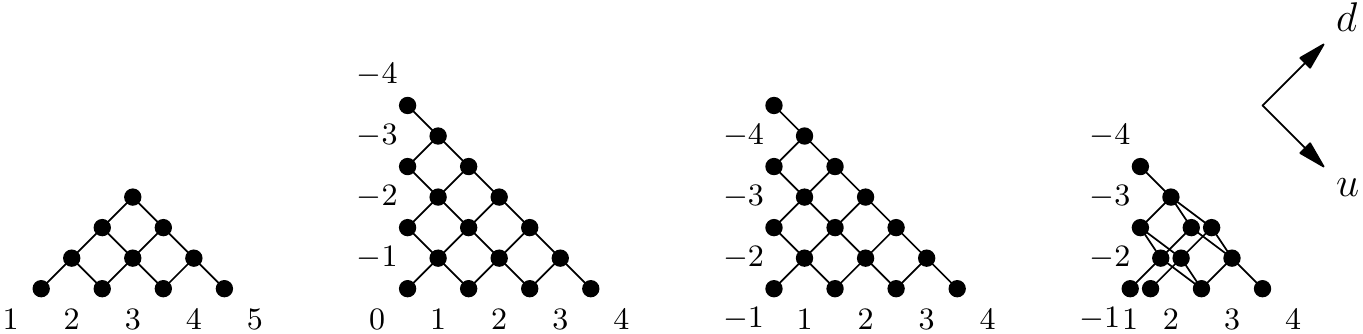}
\caption{The root posets for groups (left to right)
$A_4$, $B_4$, $C_4$, and~$D_4$.}
\label{fig:rootposets}
\end{figure}

\subsection{Classical partitions}\label{ssec:classical}

The definitions of partitions matching the objects considered in classical combinatorics
are framed geometrically in a way that has not been
generalised to all Weyl groups, depending crucially as they do on
the form the reflections take in the standard choice of coordinates.
Our treatment of partitions and our drawings
are taken from Athanasiadis and Reiner~\cite{AthRein}.
We have reversed the orderings of the ground sets from 
Athanasiadis and Reiner's presentation.

Let $W$ be a classical reflection group.  
The procedures to obtain objects representing $NN(W)$
and~$NC(W)$ can be unified to a significant degree --- 
though there will still be cases with exceptional properties ---
so we will speak of classical partitions for~$W$.  
\begin{definition}\label{def:classical partition}
A partition $\pi$ of the set 
$$\Lambda=\{\pm e_i : i=1,\ldots,n\}\cup\{0\},$$
is a {\em classical partition} for $W$
if there exists $L\in\Pi(W)$ such that
each part of~$\pi$ is the intersection 
of~$\Lambda$ with a fiber of the projection to~$\pi$. 
We write $\pi=\Part(L)$.
\end{definition}

We will streamline the notation of classical partitions 
by writing $\pm i$ for $\pm e_i$.  
Thus, a classical partition for $W$ is a partition
of $\pm[n]=\{1,\ldots,n,-1,\ldots,-n,0\}$ for some $n$, 
symmetric under negation.  
A classical partition always contains exactly one part fixed by negation,
which contains the element $0$, namely the fiber over~$0\in L$.
Since the position of 0 is predictable given the other elements,
in many circumstances we will omit it altogether.
If the block containing 0 contains other elements as well, 
we shall call it a {\em zero block}.  Negating all
elements of a block of a classical partition yields a block.
The zero block is the only fixed point of negation, so
the other blocks come in pairs of opposite sign.

For example, a typical classical partition might look like
\begin{equation}\label{eq:typical classical partition}
\{\{1,2\},\{-1,-2\},\{3,-7,-8\},\{-3,7,8\},\{5\},\{-5\},\{4,6,-4,-6,0\}\}
\end{equation}
in which $\{4,6,-4,-6,0\}$ is the zero block.
This is the partition depicted in Figure~\ref{fig:exampleNC}.

\begin{figure}[ht]
\centering
\includegraphics{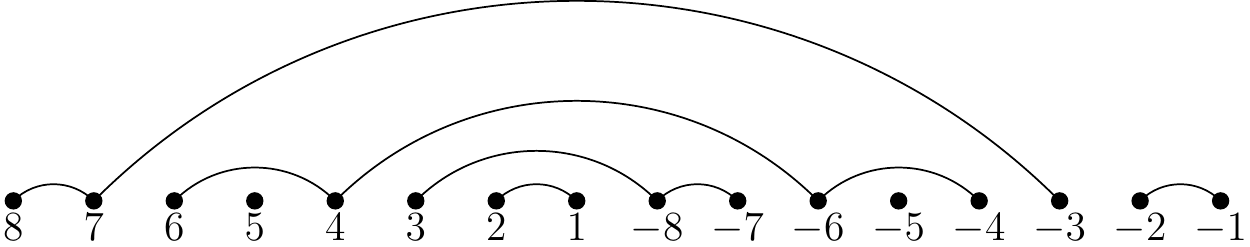}
\caption{Example of a bump diagram of a
noncrossing partition for~$B_8=C_8$.}
\label{fig:exampleNC}
\end{figure}

Given a minimal set of equations for $L$, each of which must be of the form
$$s_1 x_{i_1} = \cdots = s_k x_{i_k} ({}= 0)$$
where the $s_i\in\{+1,-1\}$ are signs,
the classical partition can be read off, one block from each equation.
To the above corresponds
$\{s_1 i_1, \ldots, s_k i_k\}$ if the ${}= 0$ is not included, and 
$\{\pm i_1, \ldots, \pm i_k, 0\}$ if it is.

In case $W=A_{n-1}$, $\rho^{\rm cl}$ fixes the set
of positive coordinate vectors $\{e_i\}$.  So a classical
partition for $W$ will be the union of a partition of $[n]$
and its negative, a partition of $-[n]$, with 0 in a block of its own. 
Here, and in everything we do henceforth with type $A$,
we will omit the redundant nonpositive parts and treat type $A$
partitions as partitions of $[n]$.  

In general the set $\{\pm e_i\}$ is stabilized by $\rho^{\rm cl}$,
giving rise to a faithful permutation representation of $W$.
Combined with the notational efficacies of the
last paragraphs, this is a convenient way to notate elements of~$W$.

To exemplify this notation: for each classical reflection group 
we have a standard choice of
Coxeter element $c$, obtained by taking the product of transpositions
in the order they occur along the bottom of the standard diagram
of the root system.
Using the permutation representations they are
\begin{equation}\label{eq:standard Coxeter elements}
c = \left\{\begin{array}{l@{\quad}l}
(1\ 2\ \ldots\ n) & \mbox{for $W=A_{n-1}$} \\
(1\ \ldots\ n\ (-1)\ \ldots\ (-n)) & \mbox{for $W=B_n=C_n$} \\
(1\ (-1))(2\ \ldots\ n\ (-2)\ \ldots\ (-n)) & \mbox{for $W=D_n$}
\end{array}\right.
\end{equation}

Finally, we introduce the type of a partition.

\begin{definition}\label{def:type}
Let $\pi=\Part(L)$ be a classical partition for a 
classical reflection group $W$.  The {\em type} 
$\type(\pi)$ of~$\pi$ is the 
conjugacy class of $L$ under the action of $W$ on~$\Pi(W)$.
\end{definition}

The collision of terminology between this sense of ``type'' and the
sense referring to a family in the 
Cartan-Killing classification is unfortunate but standard, 
so we muddle along with it.

Combinatorially, the information captured in the type 
of a classical partition is related to the multiset
of its block sizes.  Given a classical partition $\pi$,
let $\lambda$ be the cardinality of its zero block
and $\mu_1,\ldots,\mu_s$ the cardinalities of the pairs of nonzero blocks
of opposite sign.
Then the partitions of the same type as~$\pi$ are exactly those
with zero block of size~$\lambda$ and pairs of other blocks of sizes
$\mu_1,\ldots,\mu_s$.  
The integer partition $\lambda$ which Athanasiadis in~\cite{AthNCNN} 
calls the type of~$\pi$ is the partition $\mu_1,\ldots,\mu_s$.

For example,
a partition has the same type as the partition~\eqref{eq:typical classical partition},
$$\{\{1,2\},\{-1,-2\},\{3,-7,-8\},\{-3,7,8\},\{5\},\{-5\},\{4,6,-4,-6,0\}\},$$
if its zero block of size 4 and it has 
three pairs of nonzero blocks with sizes 3, 2, and~1.

\subsection{Classical noncrossing and nonnesting partitions}\label{ssec:classical NC and NN}

Definitions of the classes of noncrossing
and nonnesting classical partitions are perhaps most intuitively
presented in terms of a diagrammatic representation, motivating
the names ``noncrossing'' and ``nonnesting''.  
After Armstrong~\cite[\S5.1]{Armstrong} we call these {\em bump diagrams}.

Let $P$ be a partition of a totally ordered ground set $(\Lambda,<)$.

\begin{definition}\label{def:bump diagram}
\newcommand{\leP}{\mathbin{<_P}}
Let $G(P)$ be the graph with vertex set $\Lambda$ and edge set 
$$\{(s,s') : \mbox{$s\leP s'$ and $\not\!\exists s''\in S$ s.t.
$s\leP s''\leP s'$}\}$$
where $s\leP s'$ iff $s<s'$ and $s$ and~$s'$ are in
the same block of~$P$.

A {\em bump diagram} of~$P$ is a drawing of $G(P)$ in the plane
in which the elements of $\Lambda$ are arrayed along a horizontal line
in their given order, all edges lie above this line, and
no two edges intersect more than once.
\end{definition}

\begin{definition}\label{def:classical-noncrossing}
$P$ is {\em noncrossing} if its bump diagram contains no
two crossing edges, equivalently if $G(P)$ contains no
two edges of form $(a,c)$, $(b,d)$ with $a<b<c<d$.  
\end{definition}

\begin{definition}\label{def:classical-nonnesting}
$P$ is {\em nonnesting} if its bump diagram contains
no two nested edges, equivalently if $G(P)$ contains
no two edges of form $(a,d)$, $(b,c)$ with $a<b<c<d$.
\end{definition}

The words ``noncrossing'' and ``nonnesting'' perhaps properly
belong as predicates to the bump diagram of~$P$ and not to~$P$ itself,
but we will mostly abuse the terminology slightly and use 
them as just defined.
We will denote the set of classical noncrossing and
nonnesting partitions for $W$ by $NC^{\rm cl}(W)$, resp.\ $NN^{\rm cl}(W)$.  
To define these sets it remains only to specify
the ordered ground set.  

For $NN^{\rm cl}(W)$, the ordering we use is read off the line of integers
in Figure~\ref{fig:rootposets}.  
\begin{definition}\label{def:classNN}
A {\em classical nonnesting partition} for a classical reflection group~$W$ 
is a classical partition for~$W$ nonnesting with respect to the ground set
$$\begin{array}{c@{\quad}l}
1 < \cdots < n & \mbox{if $W=A_{n-1}$;} \\
-n < \cdots < -1 < 0 < 1 < \cdots < n & \mbox{if $W=B_n$;} \\
-n < \cdots < -1 < 1 < \cdots < n & \mbox{if $W=C_n$;} \\
-n < \cdots < -1, 1 < \cdots < n & \mbox{if $W=D_n$.}
\end{array}$$
\end{definition}

A few remarks on the interpretation of these are in order.  

Classical nonnesting partitions for~$B_n$ differ from those for~$C_n$,
reflecting the different root posets.  We have specified that 0 is part of
the ordered ground set for $B_n$.  Despite that, 
per Definition~\ref{def:classical partition},
0 can't occur in a classical partition, it is harmless to consider it present,
coming from the zero vector and forming part of (or perhaps all of) the zero block.
Its presence is quite necessary when drawing bump diagrams:
the dot 0 ``ties down'' a problematic edge of the zero block
in the middle, preventing it from nesting with the others.

The ground set for classical nonnesting partitions for~$D_n$
is not totally ordered but is merely a strict weak ordering, in which
$1$ and $-1$ are incomparable.
Definitions \ref{def:bump diagram} and~\ref{def:classical-nonnesting}
generalise cleanly to this situation,
with no amendments to the text of the definitions themselves.
That is, in a classical nonnesting partition for $D_n$, an edge with $1$ as vertex 
and another with $-1$ as vertex are never considered to nest. 
We diverge in purely cosmetic fashion from Athanasiadis and 
reinforce this last point by aligning these two dots vertically
when drawing a type $D$ nonnesting bump diagram.  

Figure~\ref{fig:exampleNN} exemplifies Definition~\ref{def:classNN},
giving one nonnesting bump diagram for each classical type.

\begin{figure}[ht]
\centering
\includegraphics{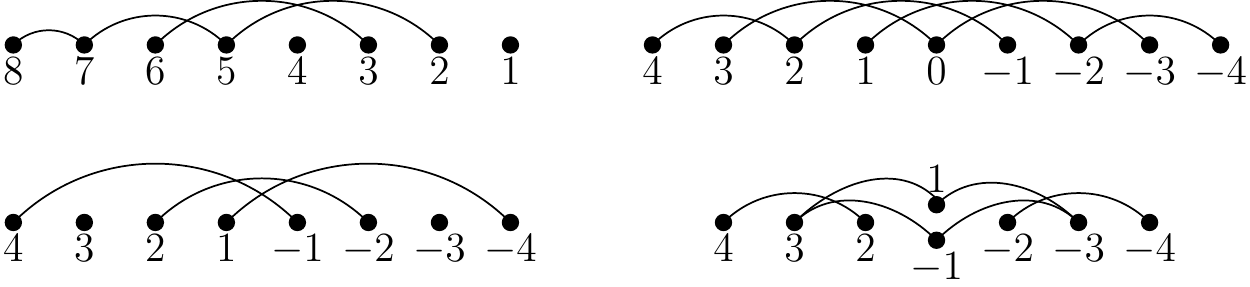}
\caption{Examples of nonnesting bump diagrams in
(top) $A_7$, $B_4$;  (bottom) $C_4$, $D_4$.}
\label{fig:exampleNN}
\end{figure}


For $NC^{\rm cl}(W)$, the ordering we use is read off of the
standard Coxeter elements in~\eqref{eq:standard Coxeter elements}. 
\begin{definition}
A {\em classical noncrossing partition} for a classical reflection group~$W$ not of type~$D$ 
is a classical partition for~$W$ noncrossing with respect to the ground set
$$\begin{array}{c@{\quad}l}
1 < \cdots < n & \mbox{if $W=A_{n-1}$;} \\
-1 < \cdots < -n < 1 < \cdots < n & \mbox{if $W=B_n$;} \\
-1 < \cdots < -n < 1 < \cdots < n & \mbox{if $W=C_n$.} \\
\end{array}$$
\end{definition}
Observe that the order $<$ in these ground sets differs from those for
nonnesting partitions. 

For $D_n$ the standard Coxeter element is not a cycle, so we can't carry
this through, though it's not too far from true that the ground set is
$-2 < \cdots < -n < 2 < \cdots < n$.  
We return to type~$D$ shortly.

{
\newcommand{\lep}{\mathbin{<'}}
These orderings come from cycles, so as one might expect,
if $P$ is noncrossing with respect to $(\Lambda,<)$, it's also
noncrossing with respect to any rotation $(\Lambda,\lep)$ of $(\Lambda,<)$,
i.e.\ any order $\lep$ on~$\Lambda$ given by
$$s\lep t \quad\Leftrightarrow\quad 
\mbox{$t\leq s_0<s$ or $s<t\leq s_0$ or $s_0<s<t$}$$
for some $s_0\in \Lambda$ fixed.
Reflecting this, given any classical partition $P$,
we may bend round the line on which the vertices
of a bump diagram for $P$ lie into a circle, and if we like
supply extra edges for newly adjacent members of the same block,
obtaining a circular bump diagram.  
Then $P$ will be noncrossing if and only if, for every pair
of distinct blocks $B,B'$ of $P$, the convex hulls of the dots
representing $B$ and $B'$ are disjoint.
}
For example, Figure~\ref{fig:exampleNCcirc} 
is the type $B$ or~$C$ noncrossing partition of Figure~\ref{fig:exampleNC}
rendered circularly.

\begin{figure}[ht]
\centering
\includegraphics{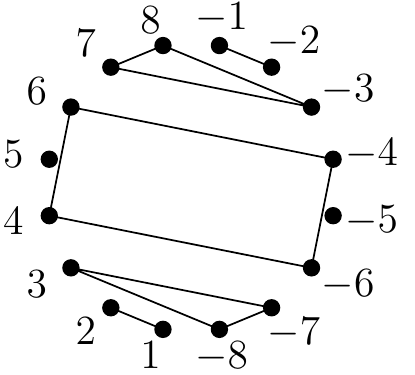}
\caption{The partition of Figure~\ref{fig:exampleNC}
rendered circularly.}
\label{fig:exampleNCcirc}
\end{figure}

The subtleties that occur defining classical noncrossing partitions 
in type~$D$ are significant, and historically it 
proved troublesome to provide the correct notion for this case.
Reiner's first definition~\cite{Reiner} of classical noncrossing
partitions for type~$D$ was later superceded
by that of Bessis and Brady and~Watt~\cite{Bessis}, \cite{BW} and Athanasiadis and
Reiner~\cite{AthRein}, which we use here, for its
better agreement with the uniform definition of~$NC(D_n)$. 
Indeed definitions \ref{def:bump diagram} through~\ref{def:classical-nonnesting} 
require tweaking to handle type~$D$ adequately.  (This said we'll still
use the name ``bump diagram'' for a diagram of a classical 
noncrossing partition for $D_n$.)

\begin{definition}
A {\em classical noncrossing partition} $\pi$ for $D_n$ 
is a classical partition for $D_n$ such that
there exists $c\in\{2,\ldots,n\}$ for which $\pi$ is
noncrossing with respect to both of the ordered ground sets
$$-2<\cdots<-c<-1<-(c+1)<\cdots<-n<2<\cdots<c<1<c+1<\cdots<n$$
and
$$-2<\cdots<-c<1<-(c+1)<\cdots<-n<2<\cdots<c<-1<c+1<\cdots<n$$
The set of these will be denoted $NC^{\rm cl}(D_n)$.
\end{definition}

We will draw these circularly.  Arrange dots labelled 
$-2,\ldots,-n,$ $2,\ldots,n$ 
in a circle and place $1$ and~$-1$ in the middle.
We let $1$ and~$-1$ be drawn coincidently, after~\cite{AthRein}, 
although it would be better to use two circles as in~\cite{KM}, 
with a smaller one in the center on which only $1$ and~$-1$ lie.  
Then a $D_n$ partition $\pi$ is noncrossing if  
and only if no two blocks in this circular bump diagram 
have intersecting convex hulls, except possibly two blocks $\pm B$
meeting only at the middle point.  The edges we will supply in these circular
diagrams are those delimiting the convex hulls of the blocks.
See Figure~\ref{fig:exampleNC D} for an example.

Note that a zero block of precisely two elements 
cannot occur in a classical partition for~$D_n$: a singular equation
$x_i=0$ cannot arise describing a subspace of $\Pi(D_n)$ which is
the intersection of hyperplanes of form $x_i=\pm x_j$.
So the two central dots $\pm1$ belong to different blocks unless they
are both inside the convex hull of some set of vertices among
$\pm\{2,\ldots,n\}$ which are part of the sole zero block.  

\begin{figure}[ht]
\centering
\includegraphics{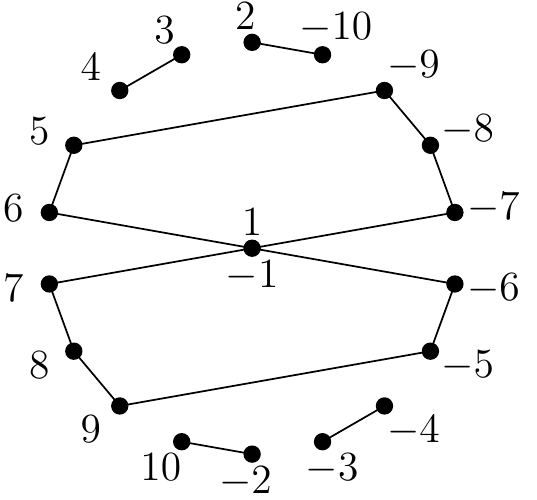}
\caption{Example of a circular bump diagram for a
type $D$ classical nonnesting partition.}
\label{fig:exampleNC D}
\end{figure}

We state without proof the relations
between these classical noncrossing and nonnesting partitions and the uniform ones.  
For $w\in W$, let the {\em fixed space} $\Fix(w)$ of~$w$ be
the subspace of $V(W)$ consisting of vectors fixed by $w$, i.e.\
$\Fix(w) = \ker(w-1)$.

\begin{proposition}\label{prop:utocNC}
The map $f_{NC} : w\mapsto \Part(\Fix(w))$ is a bijection between $NC(W,c)$
and~$NC^{\rm cl}(W)$, where
$c$ is the element in~\eqref{eq:standard Coxeter elements}.
Moreover it is an isomorphism of posets, where
$NC(W,c)$ is given the absolute order 
and $NC^{\rm cl}(W)$ the reverse refinement order.
\end{proposition}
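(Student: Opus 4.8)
The plan is to verify the claimed bijection type by type, since the definitions of $NC^{\rm cl}(W)$ themselves are given case-by-case. In each type I would first describe explicitly the permutation (or signed permutation) $w$ corresponding to an element of $NC(W,c)$, then read off $\Fix(w)$ and hence $\Part(\Fix(w))$, and check that the result is a classical noncrossing partition — and conversely that every classical noncrossing partition so arises. The key structural fact to invoke is that $NC(W,c)=[1,c]$ in the absolute order, so elements $w$ are exactly those admitting a reduced $T$-word that extends to a reduced $T$-word for $c$. For type $A_{n-1}$, $c$ is the long cycle $(1\,2\,\cdots\,n)$, and it is classical (Biane) that $w\le c$ in absolute order iff the cycles of $w$, drawn on $n$ points around a circle in the order $1,2,\ldots,n$, have pairwise non-crossing convex hulls; moreover $\Fix(w)$ in the permutation representation on $\mathbb R^n$ is cut out by the equations $x_i=x_j$ for $i,j$ in a common cycle, so $\Part(\Fix(w))$ is precisely the cycle partition of $w$. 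That gives the bijection and also identifies the absolute order with reverse refinement, since $w\le w'$ in $[1,c]$ corresponds to the cycle partition of $w$ refining that of $w'$. For types $B_n=C_n$ the same argument runs with $c$ the signed long cycle in~\eqref{eq:standard Coxeter elements}: here $w\le c$ iff the orbits of $\langle w\rangle$ on $\pm[n]$, drawn around the circle in the order $-1,\ldots,-n,1,\ldots,n$, are non-crossing, each orbit-pair $\pm B$ contributes a pair of opposite blocks (or, if $B=-B$, a single zero block), and $\Fix(w)$ is cut out by $x_i=x_j$ and $x_i=-x_j$ and $x_i=0$ according to the orbit structure — again yielding exactly the classical partition read off those equations. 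Type $D_n$ requires the extra care its definition demands: $c=(1\,(-1))(2\,\cdots\,n\,(-2)\,\cdots\,(-n))$ is not a single cycle, and one must match the two admissible circular orderings (parameterised by the $c\in\{2,\ldots,n\}$ in the type-$D$ definition) against the possible positions of the coincident central dots $\pm1$ relative to a zero block; here I would lean on Athanasiadis–Reiner's analysis of $NC^{\rm cl}(D_n)$ and on the observation, already noted in the excerpt, that a zero block of size exactly $2$ is impossible.

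In each case the verification has two halves: (i) if $w\in[1,c]$ then $\Part(\Fix(w))\in NC^{\rm cl}(W)$, and (ii) every element of $NC^{\rm cl}(W)$ is $\Part(\Fix(w))$ for a unique such $w$. For (ii) the uniqueness is the crisp point: two elements of $W$ with the same fixed space need not be equal in general, but inside the interval $[1,c]$ one shows that $w$ is recovered from $\Fix(w)$ by orienting each block cyclically in the circular order inherited from $c$ — i.e.\ $w$ is the product of the cycles obtained by listing each block of $\Part(\Fix(w))$ in circular order — and that this reconstructed element does lie below $c$ precisely when the partition is noncrossing. This is a finite, essentially combinatorial check once the circular-non-crossing reformulation of $[1,c]$ is in hand.

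Finally, for the poset statement I would argue that $f_{NC}$ intertwines the two orders by showing it sends covers to covers in the appropriate direction. A cover $w\lessdot w'$ in the absolute order within $[1,c]$ corresponds (via reduced $T$-words) to left-multiplying $w$ by one more reflection $t\in T$ with $tw\le c$; in the circular picture this merges two blocks of $\Part(\Fix(w))$ into one (when $t=(i\,j)$ or the signed analogues with $i,j$ in different orbits) or else splits a non-zero block off from the zero block, in every instance coarsening the partition by one step. Hence $w\le w'$ iff $\Part(\Fix(w))$ refines $\Part(\Fix(w'))$, which is the reverse refinement order on $NC^{\rm cl}(W)$.

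I expect the main obstacle to be type $D_n$: reconciling the single algebraic object $\Fix(w)$ for $w\le c$ with the definition of $NC^{\rm cl}(D_n)$ that quantifies over a choice of $c\in\{2,\ldots,n\}$ and two interleaved circular orders, and in particular handling the coincident central dots $\pm1$ and the forbidden two-element zero block correctly. The types $A$, $B$, $C$ are, by contrast, a direct unpacking of the now-classical description of $[1,c]$ for a Coxeter element that is a single cycle.
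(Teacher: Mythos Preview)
The paper does not actually prove this proposition: immediately before Propositions~\ref{prop:utocNC} and~\ref{prop:utocNN} the authors write ``We state without proof the relations between these classical noncrossing and nonnesting partitions and the uniform ones,'' and no argument follows.  The result is treated as background, with the implicit references being Biane's description of $[1,c]$ in type~$A$ and the Athanasiadis--Reiner paper~\cite{AthRein} for type~$D$.

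Your proposal therefore supplies strictly more than the paper does, and the sketch is essentially the standard route: identify $[1,c]$ with cycle/orbit structures via the Biane-type circular picture, read off $\Fix(w)$ block by block, and verify the poset claim on covers.  One small slip: in your cover argument you say that passing from $w$ to $w'=tw$ with $w\lessdot w'$ sometimes ``splits a non-zero block off from the zero block.''  Going up in absolute order always shrinks the fixed space, hence always \emph{coarsens} the partition; splitting would correspond to going down.  Otherwise the outline is sound, and your caution about type~$D$ is well placed --- that is exactly where the literature (Athanasiadis--Reiner) does the real work.
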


\begin{proposition}\label{prop:utocNN}
The map $f_{NN} : S\mapsto \Part(\bigcap_{\alpha\in S}\alpha^\perp)$
is a bijection between $NN(W)$ and~$NN^{\rm cl}(W)$.  
\end{proposition}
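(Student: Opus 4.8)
The plan is to show that $f_{NN}$ is well-defined (lands in $NN^{\rm cl}(W)$), injective, and surjective, handling all four classical types by a single combinatorial translation and then checking the nonnesting condition type by type. First I would make the map concrete: an antichain $S$ in the root poset is a set of positive roots, each of which in the standard coordinates of \eqref{eq:sys} has the form $e_j - e_i$ or $e_j + e_i$ or $2e_i$ or $e_i$ (according to type), i.e.\ each root $\alpha$ contributes the equation $\alpha^\perp = \{x : x_i = x_j\}$, $\{x : x_i = -x_j\}$, $\{x: x_i = 0\}$, etc. Intersecting these hyperplanes and applying $\Part$ produces a classical partition whose nonzero blocks are the connected components (under the relations $i \sim \pm j$ forced by the chosen roots) and whose zero block is generated by any root of the form $e_i$ or $2e_i$. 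So $f_{NN}$ is really the map sending an antichain to the partition whose block-merges are exactly the roots in $S$.

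The key step is showing that $f_{NN}(S)$ is nonnesting with respect to the ground set of Definition~\ref{def:classNN}, and conversely that every classical nonnesting partition arises this way from an antichain. Here I would exploit the explicit picture of the root posets in Figure~\ref{fig:rootposets}: a root $e_j - e_i$ with $i < j$ corresponds precisely to an edge from $i$ to $j$ in the bump diagram, and two roots $\beta \le \gamma$ in the root poset translate exactly to two edges that nest (in the linear order $-n < \cdots < -1 < 0 < 1 < \cdots < n$, with the appropriate deletions for $A$, $C$, $D$). Thus $S$ being an antichain is, essentially by inspection of the labelled root poset, equivalent to no two of the corresponding edges nesting — which is the defining condition of $NN^{\rm cl}(W)$. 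The subtlety is that a partition has edges not only between generating roots but also between elements forced into the same block transitively; I would argue that if $P = f_{NN}(S)$ had a nesting pair of edges at all, one could find a nesting pair already among edges representing roots of $S$ (using that the comparabilities in the root poset are ``interval-like'' — $e_j - e_i \le e_\ell - e_k$ iff $k \le i \le j \le \ell$), contradicting the antichain property; conversely, from a nonnesting partition $P$ one recovers the antichain as the set of roots corresponding to the edges of $G(P)$, and checks these form an antichain. This also yields injectivity and surjectivity simultaneously, since $S \mapsto f_{NN}(S)$ and $P \mapsto \{$roots of edges of $G(P)\}$ are mutually inverse.

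The type-specific care goes into the zero block and the special simple roots: in $B_n$ the root $e_i$ forces $x_i = 0$ hence merges $i, -i, 0$ into the zero block, and the dot $0$ in the ground set is exactly what prevents the zero block's ``long'' edge from spuriously nesting (as remarked after Definition~\ref{def:classNN}); in $C_n$ the analogous root is $2e_i$ and there is no $0$ in the ground set; in $D_n$ one uses the weak order with $1, -1$ incomparable, and one must check that the roots $e_1 + e_2$, $e_2 - e_1$ at the bottom of the $D_n$ root poset translate correctly under the convention that edges at $1$ and at $-1$ never nest. I expect the main obstacle to be precisely this bookkeeping for $D_n$: verifying that the strict-weak-order version of the nonnesting condition matches the root-poset antichain condition when the two minimal roots $e_1 \pm e_2$ are involved, and that no classical partition for $D_n$ is wrongly included or excluded. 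The other three types should follow from essentially the same picture with only the ground-set order and the presence/absence of $0$ changing.
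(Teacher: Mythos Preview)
The paper explicitly states this proposition without proof: the sentence immediately preceding Propositions~\ref{prop:utocNC} and~\ref{prop:utocNN} reads ``We state without proof the relations between these classical noncrossing and nonnesting partitions and the uniform ones.'' So there is no proof in the paper to compare your proposal against. The only thing the paper offers is the remark just after the propositions, which records the inverse map you describe (each root in the antichain yields an edge of the bump diagram, per the labelling of Figure~\ref{fig:rootposets}).

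Your proposal is a sound outline and matches that remark. One point worth tightening: you worry that transitively-generated edges of $G(P)$ might not come from roots in $S$, but in fact for an antichain $S$ the roots of $S$ are \emph{exactly} the edges of $G(f_{NN}(S))$. The reason is that comparability in the root poset translates, in the edge picture, not only to strict nesting $a<b<c<d$ but also to the degenerate case where two edges share an endpoint and one interval contains the other (e.g.\ $e_j-e_i < e_\ell-e_i$ for $i<j<\ell$). The antichain condition therefore forbids two roots from producing edges with a common endpoint ``on the same side,'' which is precisely what forces the roots of $S$ to assemble into chains and hence to be the consecutive edges of $G(P)$. Once you make this explicit, the mutual-inverse argument you sketch goes through cleanly in type $A$, and your type-by-type remarks for the zero block in $B_n$, $C_n$ and the incomparable pair $\pm1$ in $D_n$ are the right places to spend the remaining care.
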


This yields the following elementary descriptions of how to obtain
the edges in a bump diagram.
Starting from an antichain $\pi\in NN(W)$, each root gives
an edge of the nonnesting bump diagram (and its negative), between the two integers
in line with it per the discussion before Figure~\ref{fig:rootposets}.
Starting from a group element $\pi\in NC(W)$, each orbit of the
action of~$\pi$ on~$\{\pm e_i : i=1,\ldots,n\}\cup\{0\}$
gives a block of the noncrossing bump diagram, with an edge between
each element and its image under the permutation representation.

\begin{proposition}\label{th:preliminary}
Consider a
reduced expression in $T$ for some $w\in W$ where $W$ is a Weyl group,
$$w=t_{\alpha_1}t_{\alpha_2}t_{\alpha_3}\dots t_{\alpha_m} \  and \ 
    \alpha_1,\alpha_2,\dots,\alpha_m\in\Phi$$ 
Then $\Fix(w)=\bigcap_{i=1}^m\alpha_i^{\perp}$.
\end{proposition}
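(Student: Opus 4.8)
The plan is to prove both inclusions, but the heart of the matter is the inequality of dimensions. One direction is immediate: if $t_{\alpha_i}$ denotes the reflection in the hyperplane $\alpha_i^\perp$, then any vector $v\in\bigcap_{i=1}^m\alpha_i^\perp$ is fixed by every $t_{\alpha_i}$, hence by their product $w$; so $\bigcap_{i=1}^m\alpha_i^\perp\subseteq\Fix(w)$. This already gives $\dim\Fix(w)\ge n-m$ where $n=\dim V$ (taking $V$ in the classical coordinates, or more generally in the reflection representation). For the reverse inclusion it suffices to show $\dim\Fix(w)\le n-m$, i.e.\ that $\operatorname{codim}\Fix(w)\ge m$, since then the containment of subspaces combined with the dimension count forces equality.

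The key input is the standard fact that $l_T(w)$ equals the codimension of $\Fix(w)$ in $V$ (equivalently, $l_T(w) = \operatorname{rank}(w - 1)$); this is a well-known result of Carter, valid for any finite real reflection group acting on $V$. Since the given expression $w = t_{\alpha_1}\cdots t_{\alpha_m}$ is \emph{reduced} in $T$, we have $m = l_T(w) = \operatorname{codim}\Fix(w) = n - \dim\Fix(w)$. Combining with the first paragraph, $\dim\bigl(\bigcap_{i=1}^m\alpha_i^\perp\bigr)\ge n-m = \dim\Fix(w)$, and since we already have the inclusion $\bigcap_{i=1}^m\alpha_i^\perp\subseteq\Fix(w)$, the two subspaces coincide. (Implicitly this also shows the $\alpha_i$ are linearly independent, so that the intersection of the $m$ hyperplanes genuinely has codimension $m$, but we don't even need to isolate that statement — the dimension comparison does all the work.)

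The main obstacle, such as it is, is simply citing or reproving Carter's lemma that $l_T(w)=\operatorname{codim}\Fix(w)$; everything else is formal. If one prefers a self-contained argument, the inequality $l_T(w)\ge\operatorname{codim}\Fix(w)$ that we actually need follows by an easy induction: each reflection can change the dimension of the fixed space by at most $1$ (since a reflection fixes a hyperplane, $\Fix(t_\alpha w)\supseteq \Fix(w)\cap\alpha^\perp$ has codimension at most $\operatorname{codim}\Fix(w)+1$), so a product of $m$ reflections has $\operatorname{codim}\Fix \le m$; applied to a reduced word this gives $\operatorname{codim}\Fix(w)\le l_T(w)$, which is the inequality we invoked. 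I would write the proof in the three-line form: inclusion one way, the dimension bound via $l_T$, and conclude by equality of dimensions — noting that the Weyl group hypothesis is not really essential here but is harmless and keeps us in the setting of the rest of the paper.
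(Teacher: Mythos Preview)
Your proposal is correct and follows essentially the same route as the paper: the easy containment $\bigcap_i\alpha_i^\perp\subseteq\Fix(w)$, then Carter's lemma to match dimensions and conclude equality. The paper's proof is just the terse two-line version of what you wrote, citing \cite[Lemma~2]{Carter} for the dimension equality rather than the formulation $l_T(w)=\operatorname{codim}\Fix(w)$, but these are the same fact.
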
 

The $\supseteq$ containment clearly holds.  
Them the proposition is an immediate consequence of \cite[Lemma 2]{Carter},
which tells us that the two spaces have the same dimension,


\begin{corollary}\label{cor:intrs}
 Let $\rho$ be a permutation of the set $[m]$. Define
$$w_{\rho}=t_{\alpha_{\rho(1)}}
t_{\alpha_{\rho(2)}}t_{\alpha_{\rho(3)}}\dots t_{\alpha_{\rho(m)}}.$$
Then $\Fix(w)=\Fix(w_{\rho})$. 
\end{corollary}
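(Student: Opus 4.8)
The plan is to deduce the corollary from Proposition~\ref{th:preliminary} by showing that the hypothesis of that proposition applies equally well to $w_\rho$ as to $w$. First I would observe that $w_\rho$ is written as a product of $m$ reflections from $T$ (namely the $t_{\alpha_i}$ in a permuted order), so it automatically satisfies $l_T(w_\rho) \le m$. To invoke Proposition~\ref{th:preliminary} for $w_\rho$, I need this expression to be \emph{reduced}, i.e.\ $l_T(w_\rho) = m$. This is where the work lies.

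The key step is therefore to argue that $l_T(w_\rho) = m$. Here I would use the already-established half of Proposition~\ref{th:preliminary}: the ``$\supseteq$'' containment, valid for \emph{any} product of reflections regardless of reducedness, gives $\Fix(w_\rho) \supseteq \bigcap_{i=1}^m \alpha_{\rho(i)}^\perp = \bigcap_{i=1}^m \alpha_i^\perp = \Fix(w)$, the last equality by Proposition~\ref{th:preliminary} applied to the reduced word for $w$. Since $w = t_{\alpha_1}\cdots t_{\alpha_m}$ is reduced, $l_T(w) = m$, and a standard fact about reflection length in a reflection group is that $l_T(u) = \mathrm{codim}\,\Fix(u)$ for all $u \in W$ (equivalently $l_T(u)$ equals the rank of $u - 1$). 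Hence $\mathrm{codim}\,\Fix(w) = m$. Combined with $\Fix(w_\rho) \supseteq \Fix(w)$, this forces $\mathrm{codim}\,\Fix(w_\rho) \le m$, and writing $w_\rho$ as a product of $m$ reflections shows $l_T(w_\rho) \le m$; but also $l_T(w_\rho) \ge \mathrm{codim}\,\Fix(w_\rho)$ always holds (each reflection drops the dimension of the fixed space by at most one, so removing the corresponding hyperplane constraints can only increase dimension by one per factor). Chasing these inequalities, together with $\Fix(w_\rho)\supseteq\Fix(w)$ of full codimension $m$, pins down $\Fix(w_\rho) = \Fix(w)$ and $l_T(w_\rho) = m$ simultaneously.

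Actually, once one has $\Fix(w_\rho) \supseteq \Fix(w)$ with $\mathrm{codim}\,\Fix(w) = m$ and $\mathrm{codim}\,\Fix(w_\rho) \le m$ (because $w_\rho$ is a product of $m$ reflections, each cutting down the fixed space by at most one dimension), the inclusion of a subspace of codimension $m$ inside a subspace of codimension at most $m$ is only possible if they coincide. So $\Fix(w_\rho) = \Fix(w)$, which is exactly the conclusion $\Fix(w) = \Fix(w_\rho)$.

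The main obstacle I anticipate is the clean justification of the identity $l_T(u) = \mathrm{codim}\,\Fix(u)$ (or at least the inequality $\mathrm{codim}\,\Fix(u) \le l_T(u)$, which is the only direction strictly needed here, and follows immediately by induction from $\Fix(st) \supseteq \Fix(s)\cap\Fix(t)$ and each reflection having a hyperplane as fixed space). This is classical --- it is in Carter~\cite{Carter}, the very reference cited for Proposition~\ref{th:preliminary} --- so I would simply cite it rather than reprove it, keeping the corollary's proof to the two or three lines of the codimension count above. An alternative, perhaps slicker, route avoids reflection length entirely: argue directly that any reduced $T$-word for $w_\rho$ must have length $\geq m$ because $\Fix(w_\rho)\supseteq\Fix(w)$ would otherwise be contradicted --- but this secretly uses the same codimension bound, so there is no real savings.
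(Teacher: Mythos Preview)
Your inequality chase does not close. You correctly derive $\Fix(w_\rho)\supseteq\Fix(w)$ with $\mathrm{codim}\,\Fix(w)=m$, and that being a product of $m$ reflections gives $\mathrm{codim}\,\Fix(w_\rho)\le m$. But the assertion in your third paragraph --- that a codimension-$m$ subspace contained in a subspace of codimension \emph{at most} $m$ must coincide with it --- is false: any codimension-$m$ subspace sits inside the whole ambient space, which has codimension $0\le m$. Every bound you obtain on $\mathrm{codim}\,\Fix(w_\rho)$ is an upper bound, and the containment $\Fix(w_\rho)\supseteq\Fix(w)$ likewise only bounds $\mathrm{codim}\,\Fix(w_\rho)$ from above. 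Nothing you have written forces $\mathrm{codim}\,\Fix(w_\rho)\ge m$, which is what is actually needed to conclude equality. Invoking the identity $l_T(u)=\mathrm{codim}\,\Fix(u)$ does not rescue this: combined with $l_T(w_\rho)\le m$ it again yields only $\mathrm{codim}\,\Fix(w_\rho)\le m$.

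The missing ingredient is the full strength of Carter's Lemma~2: a product $t_{\beta_1}\cdots t_{\beta_k}$ is a reduced $T$-word \emph{if and only if} $\beta_1,\ldots,\beta_k$ are linearly independent. Proposition~\ref{th:preliminary} applied to the reduced word for $w$ gives $\bigcap_i\alpha_i^\perp=\Fix(w)$ of codimension~$m$, so the $\alpha_i$ are linearly independent. Linear independence is permutation-invariant, so Carter's criterion makes the permuted word for $w_\rho$ reduced as well, and now Proposition~\ref{th:preliminary} applies to $w_\rho$ directly to give $\Fix(w_\rho)=\bigcap_i\alpha_{\rho(i)}^\perp=\bigcap_i\alpha_i^\perp=\Fix(w)$. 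This is the intended one-line derivation behind the paper's presenting the statement as a corollary.
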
 

So, if we are given an antichain $A$ 
of the root poset for some group~$W$,
we may define $\Fix(A)$ to be $\Fix(\pi_A)$
where $\pi_A$ is the product of the elements 
of~$A$ in any order. 
The elements of an antichain are linearly independent so
Corollary~\ref{cor:intrs} shows that $\Fix(A)$ is well-defined.
See~\cite{Sommers}.

Lastly, the distribution of classical noncrossing and nonnesting partitions 
with respect to type is well-behaved.
In the noncrossing case, the images of the conjugacy classes of the
group $W$ itself are the same as these conjugacy classes of
the action of $W$ on~$\Pi(W)$.  

One can check that 
\begin{proposition}\label{prop:what types are}
Two subspaces $L, L'\in\Pi(W)$ are conjugate if and only if
both of the following hold:
\begin{itemize}
\item the multisets of block sizes $\{|C| : C\in\Part(L)\}$
and $\{|C| : C\in\Part(L')\}$ are equal;
\item if either $\Part(L)$ or~$\Part(L')$
has a zero block, then both do, and these zero blocks have equal size.
\end{itemize}
\end{proposition}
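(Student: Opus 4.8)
The plan is to argue directly from the description of $\Pi(W)$ as an intersection poset together with the combinatorial classification of classical partitions that was set up in \S\ref{ssec:classical}. First I would recall that, by Definition~\ref{def:classical partition}, $\Part$ sets up a bijection between $\Pi(W)$ and the classical partitions for~$W$; so the statement to prove is equivalent to: the $W$-orbit of $L\in\Pi(W)$ is determined by, and determines, the multiset of block sizes of $\Part(L)$ together with the size of the zero block (with the convention that ``no zero block'' is recorded as zero block of size~$1$, i.e.\ just the singleton~$\{0\}$). The ``only if'' direction is the easy half: $W$ acts on~$\Lambda$ by signed permutations fixing~$0$, and if $wL=L'$ then $w$ carries $\Part(L)$ to $\Part(L')$ as partitions of~$\Lambda$, hence preserves all block sizes and in particular sends the zero block (the unique negation-fixed block) to the zero block.

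For the ``if'' direction I would treat the four Cartan--Killing families using the explicit coordinates in~\eqref{eq:sys}. The point is that for each type one has a clean normal form for subspaces $L\in\Pi(W)$: $L$ is cut out by equations of the shape $s_1x_{i_1}=\cdots=s_kx_{i_k}$ (and possibly $=0$), one such equation per nonsingleton block of $\Part(L)$, as spelled out just before the type-$A$ discussion. In type $A_{n-1}$ there is no sign freedom and no zero block, so $L$ is literally an ordinary set partition of $[n]$ and its orbit under $S_n$ is its multiset of block sizes --- classical. In types $B_n$ and $C_n$, $W$ is the full group of signed permutations, so given two partitions with the same block-size data one can first match up the zero blocks (using that $W$ acts transitively on subsets of~$\pm[n]$ of a given size that are symmetric under negation and contain a prescribed symmetric ``core''), then match the remaining $\pm$-pairs of blocks in decreasing size order, choosing signs freely since both groups contain all sign changes. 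The only genuinely delicate case is $D_n$: here $W$ consists of signed permutations with an \emph{even} number of sign changes, so not every sign assignment is available. I expect this to be the main obstacle. The resolution is the standard one: when $\Part(L)$ has a nonsingleton zero block, that zero block absorbs any parity defect (conjugating by a single transposition $e_i\mapsto -e_i$ with $e_i$ in the zero block changes $L$ not at all, because $x_i=0$ is already an equation), so full sign freedom is effectively restored; and when $\Part(L)$ has no zero block, one checks that the forbidden single-sign-change is never needed because one always has at least one $\pm$-pair of blocks on which to spend a compensating sign flip, unless $L$ is the generic point, which is its own orbit. (This is exactly the phenomenon behind the two ground-set orderings in the definition of $NC^{\mathrm{cl}}(D_n)$.)

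So the key steps, in order, are: (1) reduce to a statement about $\Pi(W)$ via $\Part$; (2) dispatch the ``only if'' direction by the signed-permutation action; (3) in types $A,B,C$ build the conjugating element greedily by matching zero blocks and then $\pm$-pairs in size order; (4) in type~$D$ handle the parity constraint, splitting on whether a nontrivial zero block is present and checking the lone exceptional orbit. The one nontrivial verification is step~(4); everything else is bookkeeping with the explicit coordinates, which is why the excerpt only asks the reader to ``check'' the proposition.
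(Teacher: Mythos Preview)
The paper does not actually prove this proposition; it is introduced with ``One can check that'' and followed only by the type~$A$ example. So there is no argument to compare yours against, and your sketch is a reasonable attempt to fill in what the paper omits. Your treatment of types $A$, $B$, $C$ and of the ``only if'' direction is fine, and you are right that the parity constraint in $D_n$ is the crux.

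However, your resolution of step~(4) has a real gap. When there is no zero block, you propose to fix an odd-parity conjugator by swapping some block~$B$ with~$-B$. But that swap negates $|B|$ coordinates and hence changes the parity of the sign count by $|B|\bmod 2$; if every nonzero block has even size, no such swap helps. In fact the gap cannot be repaired, because the proposition as stated is false in this ``very even'' case. In $D_4$, take $L=\{x_1=x_2,\ x_3=x_4\}$ and $L'=\{x_1=-x_2,\ x_3=x_4\}$: both have four nonzero blocks of size~$2$ and no zero block, yet they are not $D_4$-conjugate. Indeed, any element of $B_4$ preserving the partition $\{\{1,2\},\{-1,-2\},\{3,4\},\{-3,-4\}\}$ already has an even number of sign changes, so $\mathrm{Stab}_{B_4}(L)\subseteq D_4$ and the single $B_4$-orbit through $L$ and~$L'$ splits into two $D_4$-orbits. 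A correct version of the proposition for type~$D$ requires an additional invariant (or an explicit exception) when all block sizes are even and there is no zero block.
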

For example, the type $A$ specialisation of this result, 
where zero blocks are irrelevant and we drop the redundant negative elements, 
says that the conjugacy classes of the symmetric group $A_{n-1}$ on $n$ elements
are enumerated by the partitions of the integer $n$.  

We close this section with the statement of the 
equidistribution result of Athana\-sia\-dis and~Reiner~\cite{AthRein}.

\begin{theorem}\label{th:AR}
Let $W$ be a Weyl group.  
Let $f_{NC}$ and~$f_{NN}$ be the functions
of Propositions \ref{prop:utocNC} and~\ref{prop:utocNN}.
For any type~$\lambda$ we have 
$$
|(\type\circ f_{NC})^{-1}(\lambda)| = 
|(\type\circ f_{NN})^{-1}(\lambda)|.
$$
\end{theorem}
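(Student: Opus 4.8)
The plan is to prove Theorem~\ref{th:AR} by exhibiting, for each classical type in turn, an explicit type-preserving bijection between $NN^{\rm cl}(W)$ and $NC^{\rm cl}(W)$; equidistribution by type is then an immediate corollary, since Propositions~\ref{prop:utocNC} and~\ref{prop:utocNN} identify these combinatorial sets with the uniform ones and the statistic $\type$ is defined intrinsically on $\Pi(W)$. By Proposition~\ref{prop:what types are}, ``type'' on classical partitions is just the data of the multiset of block sizes together with the size of the zero block (if any), so it suffices to produce a bijection $NN^{\rm cl}(W)\to NC^{\rm cl}(W)$ preserving both the multiset of block sizes and the zero-block size.

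First I would treat type $A$, which is the base case and the cleanest: a nonnesting partition of $[n]$ and a noncrossing partition of $[n]$ live on the same linearly ordered ground set, and the classical bijection here can be described via a ``bump sliding'' or local switching move on the bump diagram --- roughly, one reads off for each vertex whether it is an opener, closer, transient, or singleton, and then rebuilds the unique noncrossing diagram with the same opener/closer pattern. This manifestly preserves block sizes. Then for types $B$ and $C$ I would exploit the fact that the ground sets used for $NN^{\rm cl}$ and $NC^{\rm cl}$ differ only by a reordering of the negative half (Definitions~\ref{def:classNN} and the one following): I would describe an explicit relabeling/sliding procedure, negation-equivariant by construction so that the symmetry of classical partitions is respected, and track the zero block carefully --- this is where the dot $0$ in the $B_n$ ground set earns its keep, since it pins down the zero-block edge and prevents spurious nestings. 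Because the bijection is built to be negation-equivariant and to move each element among blocks of controlled size, preservation of the zero-block size and of the multiset of nonzero block-size pairs will follow.

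For type $D$ the construction is genuinely more delicate, because $NC^{\rm cl}(D_n)$ is defined by a pair of circular orderings depending on a parameter $c\in\{2,\ldots,n\}$, and because the central dots $\pm 1$ coincide in the picture. Here I would first handle the generic case in which $1$ and $-1$ lie in distinct, non-zero blocks, reducing it to the $B_{n-1}$-style construction on the outer cycle $\pm\{2,\ldots,n\}$ and then reinserting $\pm 1$ in the correct positions; then I would separately analyze the cases where $\pm1$ lie in the zero block (so the zero block has at least four elements, per the remark that a two-element zero block is impossible in $D_n$) and where $1,-1$ lie in a single nonzero block. Checking that these pieces glue into a well-defined bijection --- in particular that the output always satisfies the two-ordering condition for some valid $c$, and that no block-size or zero-block data is disturbed --- is the main obstacle, and the bulk of Section~\ref{sec:typepres} will be devoted to it.

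The hard part, then, is not type $A$ but the bookkeeping in type $D$: verifying that the circular noncrossing condition (disjoint convex hulls, except for $\pm B$ pairs meeting at the center) is preserved, that the parameter $c$ can always be chosen consistently, and that the map is invertible with the same properties. Types $B$ and $C$ are intermediate in difficulty, requiring care with the zero block but no parameter choice. Throughout, the fact that the bijection is defined uniformly (given the fixed coordinates) means that once the local sliding rule is pinned down, type preservation is essentially automatic; the case-by-case nature enters only in unpacking Definition~\ref{def:classNN} and the noncrossing ground-set definitions into explicit combinatorics for each family.
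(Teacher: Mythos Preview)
Theorem~\ref{th:AR} is not proved in this paper: it is quoted from Athanasiadis--Reiner~\cite{AthRein} as a known result, and their proof (case-by-case, non-bijective for exceptional types) is not reproduced here. What the present paper does is give a new \emph{bijective} proof for the classical types only, packaged as Theorems~\ref{th:ir2.3}--\ref{th:2.6} and the unifying Theorem~\ref{th:theTheo}. So there is no ``paper's own proof'' of Theorem~\ref{th:AR} to compare against, and your proposal, which treats only $A$, $B$, $C$, $D$, does not prove Theorem~\ref{th:AR} as stated: it says nothing about the exceptional Weyl groups $E_6$, $E_7$, $E_8$, $F_4$, $G_2$. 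That is a genuine gap relative to the theorem statement, though not relative to what this paper actually accomplishes.

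Restricting attention to the classical types, your sketch is plausible but it is \emph{not} the bijection the paper constructs. Your type~$A$ description --- record the opener/closer/transient/singleton pattern and rebuild --- is the approach of Conflitti--Mamede~\cite{CM}, explicitly contrasted with the present paper's bijection in the introduction. The paper instead records, for each block, its least element and its size (the statistics $a$ and $\mu$), and shows these determine a unique noncrossing and a unique nonnesting partition; the extension to types $B$, $C$, $D$ then proceeds by augmenting this with further statistics $\nu$, $c$, $\xi$ governing the switching blocks, together with explicit reorderings $\sigma_B$, $\sigma_D$ of $\nu$ to make the zero block land correctly. Your idea of reducing type~$D$ to a $B_{n-1}$ picture on the outer cycle is close in spirit to the paper's ``central merging'' (Lemma~\ref{lem:central merging}), but your proposed negation-equivariant sliding for $B$ and $C$ is vaguer and structurally different from the $(a,\mu,\nu)$ mechanism actually used. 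Both routes yield type-preserving bijections for classical $W$; the paper's has the advantage of being governed by a single uniform statement (Theorem~\ref{th:theTheo}) phrased in terms of $\Fix(x)$, while the opener/closer route tends to require more ad hoc verification in type~$D$.
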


\section{A type-preserving bijection for classical groups}\label{sec:typepres}

Throughout this section $W$ will be a classical reflection group.
Partitions will be drawn and spoken of 
with the greatest elements of their ground sets to the left.  

Given any partition, define the order $\lel$ on 
those of its blocks containing positive elements
so that $B\lel B'$ if and only if the least positive element of~$B$
is less than the least positive element of~$B'$.

The notation $NC(W)$ with the Coxeter element omitted will mean $NC(W,c)$,
$c$ being the element in~\eqref{eq:standard Coxeter elements}.

By convention, when we define partition statistics,
we shall observe the convention that Roman letters (like $a$) 
denote ground set elements or tuples thereof, and Greek letters (like $\mu$) 
denote cardinalities or tuples thereof.

\subsection{Statement of the central theorem}\label{ssec:theorem}

\newcommand{\numones}[1]{\#(#1,1)}
\newcommand{\numnegones}[1]{\#(#1,-1)}
\newcommand{\pmtupleset}{\Psi^n}
\newcommand{\llex}{\mathrel{<_{\rm lex}}}
\newcommand{\absdiff}[1]{\left\Arrowvert#1\right\Arrowvert}
\newcommand{\myfans}{f(C_i)}
\newcommand{\myfas}{f(S_j)}

We establish some notation.

\begin{definition}
Let $\pmtupleset$ be the set
of $n$-tuples with entries in $\left\{1,0,-1\right\}$. For any
$u\in\pmtupleset$ define $\numones u$ to be the
number of entries equal to $1$ in $u$ and define
$\numnegones u$ analagously.
Let $\llex$ be the lexicographic order on $n$-tuples.
For any two vectors $a,b\in\mathbb Z^n$, let $\underline{a}$ be
the set of elements of $\mathbb Z^n$ $\llex$-less than
or equal to $a$ and let $\absdiff{a-b}=(|a_1-b_1|,\dots,|a_n-b_n|)$.
\end{definition}   

To any nonnesting or noncrossing partition $x$ of $W$ 
we associate a set $\Omega_x$ which is constructed
inductively with $i$ increasing from
$1$ to~$n$ stepwise. Initially, we begin with
$\Omega_x=\emptyset$.
In step $i$, let $u_i$ be the element of $\pmtupleset\cap \Fix(x)$
with $\absdiff{e_i-u_i}$ $\llex$-minimal (actually 
$\absdiff{e_i-u_i}\in\pmtupleset$ ). 
Whenever $u_i$ is linearly independent with the elements of $\Omega_x$,
let $\Omega_x=\Omega_x\cup\left\{-u_i\right\}$
if $u_i$ has some entry $-1$ and let 
$\Omega_x=\Omega_x\cup\left\{u_i\right\}$ if not. 
Let $\Gamma_x$ be the number of canonical coordinate projections
of $\Fix(x)$ with trivial image $\left\{0\right\}$. 

Lastly, let $E$ be the canonical basis of~$\mathbb R^n$.

\begin{theorem}\label{th:theTheo}
Let $x\in NN(W)$ {\em [resp. $x\in NC(W)$]}. Then, 
there is a unique $y\in NC(W)$ {\em [resp. $y\in NN(W)$]} for which 
$\Gamma_{x}=\Gamma_{y}$ and such that 
the sets $\Omega_x$ and $\Omega_{y}$ are
related to each other in the following way:

There is a bijection $\sigma$ between $\Omega_x$ and
$\Omega_y$ such that for each $u\in \Omega_x$ we have 
$\sigma(u)\in \Omega_y$ satisfying 
\begin{itemize}
\item $\numones u=\numones {\sigma(u)}$ and 
$\numnegones u=\numnegones {\sigma(u)}$ 
\item $|\underline{u}\cap E|=|\underline{\sigma(u)}\cap E|$
\item $|\underline{u}\cap \Omega_x|=
      |\underline{\sigma(u)}\cap \Omega_y|$ 
\item the product of the first two nonzero components of
      $u$ and $\sigma(u)$ is not equal whenever $\numnegones u>1$ and $\numones u>0$
\end{itemize}
Consequently, the induced mapping establishes a 
bijection between noncrossing and nonnesting partitions preserving
orbital type.
\end{theorem}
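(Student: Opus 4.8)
The plan is to package the statement into four parallel assertions, one per classical type, phrased in the bump‑diagram models of $NC^{\rm cl}(W)$ and $NN^{\rm cl}(W)$ supplied by Propositions~\ref{prop:utocNC} and~\ref{prop:utocNN}. First observe that for $x\in NN(W)$ or $x\in NC(W)$ the data $\Omega_x,\Gamma_x$ depend only on the subspace $L=\Fix(x)\in\Pi(W)$, equivalently only on the classical partition $\Part(L)$, so everything can be computed from the corresponding bump diagram. To such an $x$ attach the \emph{profile}
$$
\mathrm{prof}(x)=\Bigl(\Gamma_x,\ \bigl\{\!\!\bigl\{\bigl(\numones u,\ \numnegones u,\ |\underline{u}\cap E|,\ |\underline{u}\cap\Omega_x|,\ \varepsilon(u)\bigr):u\in\Omega_x\bigr\}\!\!\bigr\}\Bigr),
$$
a multiset of stat‑tuples in which $\varepsilon(u)$ is the sign of the product of the first two nonzero components of $u$. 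Unwinding the four bulleted conditions, a nonnesting $x$ and a noncrossing $y$ are \emph{related} in the sense of the theorem exactly when $\Gamma_x=\Gamma_y$ and there is a bijection of their stat‑tuple multisets preserving the first four entries and, on the tuples with $\numnegones u>1$ and $\numones u>0$, reversing the last; this relation is symmetric, since each bulleted condition is symmetric in $u\leftrightarrow\sigma(u)$ while the first makes the hypothesis ``$\numnegones u>1$ and $\numones u>0$'' invariant under $\sigma$. Hence it suffices to prove, for each $W\in\{A_{n-1},B_n,C_n,D_n\}$: (i) $x\mapsto\mathrm{prof}(x)$ is injective on $NN(W)$; (ii) it is injective on $NC(W)$; (iii) the set of profiles realized on $NN(W)$ coincides, after the sign reversal above, with the set realized on $NC(W)$; (iv) $\type(x)$ is recovered from $\mathrm{prof}(x)$ by the same rule in both cases. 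Then (i)--(iii) force ``related'' to be a perfect matching $NN(W)\leftrightarrow NC(W)$, i.e.\ the asserted bijection, and (iv) makes it type‑preserving.

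The work in each type begins by describing $\Fix(x)\cap\pmtupleset$ explicitly. In $A_{n-1}$ it is the set of $\{0,\pm1\}$‑vectors constant on the blocks of $x$; in $B_n$, $C_n$, $D_n$ it is spanned, with pairwise disjoint supports, by the signed indicator vectors of the pairs $\{B,-B\}$ of nonzero blocks, so a member is just a choice of sign on a subset of those pairs (with, in $D_n$, the usual caveat around $\pm e_1,\pm e_2$). From this the greedy construction is seen to add to $\Omega_x$ one normalized vector per nonzero block‑pair --- essentially the signed indicator of the block whose least positive label is $e_i$, inserted at step $i$ --- so $\Omega_x$ is a canonical $\{0,\pm1\}$‑basis of $\Fix(x)$, while $\Gamma_x$ counts the coordinates that vanish identically on $\Fix(x)$ and so records the zero block. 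Consequently $\numones u,\numnegones u$ are the numbers of positive and negative labels in the associated block, $|\underline{u}\cap E|$ locates its least positive label in the ground order, $|\underline{u}\cap\Omega_x|$ gives the position of that label among all block‑minima, and $\varepsilon(u)$ records a bit of internal sign pattern; together with $\Gamma_x$ this is exactly the data a bump diagram is determined by. Carrying the computation through turns the abstract map into a concrete recipe on diagrams --- this is what the paper means by ``specializing the definition'' --- and (i), (ii) reduce to the claim that a nonnesting, resp.\ noncrossing, diagram is reconstructible from this list of per‑block statistics.

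For (iii), fix an orbital type $\lambda$, which by Proposition~\ref{prop:what types are} is a multiset of block sizes together with a zero‑block size. These are read off $\mathrm{prof}(x)$ as $\{\numones u+\numnegones u:u\in\Omega_x\}$ and from $\Gamma_x$, so within the fibre over $\lambda$ the only remaining freedom in a stat‑tuple multiset is the cyclic (linear, in type $A$) order of the block‑minima together with the sign pattern; one checks, type by type, that the nonnesting and the noncrossing models put the profiles of $\lambda$‑partitions in bijection with the \emph{same} such configurations. This gives (iii), and simultaneously (iv), since $\type(x)$ is then recovered uniformly from $\mathrm{prof}(x)$ as the pair $\bigl(\{\numones u+\numnegones u:u\in\Omega_x\},\ \Gamma_x\bigr)$, which is unaffected by the cosmetic sign reversal.

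The substantive case is $D_n$, with the degenerate $C_n$ configurations a milder instance of the same phenomenon. In $D_n$ the nonnesting ground set is only a strict weak order, the standard Coxeter element is not a single cycle, the noncrossing model needs the two‑ordering description with the central dots $\pm1$, and a two‑element zero block cannot occur; each of these disturbs $\Fix(x)\cap\pmtupleset$ (the vectors $e_1\pm e_2$ now enter) and the first step or two of the greedy construction. The fourth bulleted condition --- the forced reversal of the sign‑product precisely when a block carries at least two negative and at least one positive label --- is exactly the adjustment reconciling the sign conventions of the $D_n$ (and degenerate $C_n$) diagrams with those inherited from $\Fix$, and the heart of the proof is verifying that with this adjustment (i)--(iv) still hold in type $D$; the other types are routine once $\Fix(x)\cap\pmtupleset$ has been pinned down.
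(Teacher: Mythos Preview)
Your plan is correct and is essentially the paper's own approach: identify $\Omega_x$ with the canonical block-indicator basis $\beta$ of $\Fix(x)$, translate the four bulleted conditions into the per-block statistics $(a^*,\mu^*,\vartheta^*,\eta^*)$, and then verify type by type that nonnesting and noncrossing partitions with matching statistics correspond uniquely. The only organizational difference is that the paper front-loads the type-by-type work as the separate Theorems~\ref{th:ir2.3}--\ref{th:2.6} (each with its own concrete statistics $a,\mu,\nu,\ldots$) and then, in the proof of Theorem~\ref{th:theTheo}, checks case by case that those bijections preserve $(a^*,\mu^*,\vartheta^*,\eta^*)$ and hence realize the abstract conditions; your ``one checks, type by type'' is exactly that body of work, and the $D_n$ subcase analysis you flag as ``the heart of the proof'' is the paper's lengthy $c(\pi)\in\{(),(+),(-),(\pm),(+,-),(-,+)\}$ breakdown.
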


%

\begin{example}
Let $x$ be the nonnesting partition 
$\{e_2+e_1,e_5-e_1,e_6-e_2,e_8-e_6,e_7-e_3\}$ 
of the group $C_8$. The fixed space $\Fix(x)$
is the following intersection in $\mathbb{R}^8$: 
$$\{v|v_1=-v_2\}\cap\{v|v_1=v_5\}\cap\{v|v_2=v_6\}\cap\{v|v_6=v_8\}
\cap\{v|v_3=v_7\}$$
This is the set 
$\{v\in\mathbb{R}^8|v_1=v_5=-v_2=-v_6=-v_8\mbox{ and }v_3=v_7\}$. 
We can see $\Gamma_x=0$ and also 
$$\Omega_x=\{(-1,1,0,0,-1,1,0,1),(0,0,1,0,0,0,1,0),(0,0,0,1,0,0,0,0)\}$$
Now, we may check 
\begin{multline*}t_{e_7-e_6}t_{e_8-e_7}t_{e_1+e_8}
t_{e_2-e_1}t_{e_5-e_3}t_{e_5-e_2}t_{e_4-e_2}t_{2e_5}
\\=t_{2e_1}t_{e_2-e_1}t_{e_3-e_2}t_{e_4-e_3}
t_{e_5-e_4}t_{e_6-e_5}t_{e_7-e_6}t_{e_8-e_7}\end{multline*}
so $y=t_{e_7-e_6}t_{e_8-e_7}t_{e_1+e_8}
t_{e_2-e_1}t_{e_5-e_3}$ is less than a Coxeter element in the
absolute order and thus is a noncrossing partition
of $C_8$. We can calculate
$\Fix(y)=\{v\in\mathbb{R}^8|v_1=v_2=-v_6=-v_7=-v_8\mbox{ and }
v_3=v_5\}$, so $\Gamma_y=0$ and also
$$\Omega_y=\{(-1,-1,0,0,0,1,1,1),(0,0,1,0,1,0,0,0),
(0,0,0,1,0,0,0,0)\}$$ 
Finally, let $\sigma:\Omega_x\mapsto\Omega_y$ be given
by the assignments in the left column of the following table.  
$$\begin{tabular}{r|c|c|c|c} 
& $\#(u,1)$ & $\#(u,-1)$ & $|\underline{u}\cap E|$ & $|\underline{u}\cap \Omega_x|$ \\[2pt]\hline
\parbox{1.6in}{\hfill$(-1,1,0,0,-1,1,0,1)$\\\mbox{}\hfill$\mapsto (-1,-1,0,0,0,1,1,1)$}
   & 3 & 2 & 0 & 1 \\[2pt]\hline
\parbox{1.6in}{\hfill$(0,0,1,0,0,0,1,0)  $\\\mbox{}\hfill$\mapsto (0,0,1,0,1,0,0,0)  $}
   & 2 & 0 & 6 & 3 \\\hline
\parbox{1.6in}{\hfill$(0,0,0,1,0,0,0,0)  $\\\mbox{}\hfill$\mapsto (0,0,0,1,0,0,0,0)  $}
   & 1 & 0 & 5 & 2 \\\hline
\end{tabular}$$
The remaining columns record the values in Theorem~\ref{th:theTheo}; in each case
they are equally true of~$u$ (and~$\Omega_x$) and of~$\sigma(u)$ (and~$\Omega_y$).
The last bullet in the Theorem only has force in the first line, where it also holds.
So $\sigma$ satisfies the required properties.
\end{example}
In the remainder of the paper we will prove Theorem~\ref{th:theTheo}. 
The four sections that follow will give, in a case by case fashion, the individual
type preserving bijections for each of the classical types
that arise from the theorem.  Then in \S\ref{ssec:proof}
we tie these together and complete the proof.

\subsection{Type $A$}\label{ssec:type A}

The bijection in type $A$, which forms the foundation of the ones for
the other types, is due to Athanasiadis~\cite[\S3]{AthNCNN}.
We include it here to make this foundation explicit and
to have bijections for all the classical groups in one place.

Let $\pi$ be a classical partition for $A_{n-1}$. 
Let $M_1\lel\cdots\lel M_m$ be the blocks of $\pi$, and
$a_i$ the least element of $M_i$, so that 
$a_1<\cdots<a_m$.  Let $\mu_i$ be the cardinality of $M_i$.  
Define the two statistics
$  a(\pi) = (  a_{1}, \ldots,   a_{m}) $ and 
$\mu(\pi) = (\mu_{1}, \ldots, \mu_{m}) $.

It turns out that classical nonnesting and noncrossing partitions 
are equidistributed with respect to these partition statistics, and that
they uniquely determine one partition of either kind. 
This will be the mode in which we present all of our bijections,
which will differ from this one in the introduction of more statistics.

We will say that a list of partition statistics $S$ establishes
a bijection for a classical reflection group $W$ if,
given either a classical noncrossing partition $\pi\NC$ or
a classical nonnesting partition $\pi\NN$ for $W$, 
the other one exists uniquely such that 
$s(\pi\NC) = s(\pi\NN)$ for all $s\in S$.  
We will say it establishes a type-preserving bijection if
furthermore $\pi\NC$ and~$\pi\NN$ always have the same type.


\begin{theorem}\label{th:ir2.3}
The statistics $(a,\mu)$ establish a type-preserving bijection for $A_{n-1}$.
\end{theorem}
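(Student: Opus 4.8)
The plan is to prove that the map sending a classical nonnesting partition to the classical noncrossing partition with the same $(a,\mu)$ statistics is well-defined and bijective by exhibiting explicit normal forms for partitions of each kind in terms of these statistics. First I would characterise which pairs of vectors $(a,\mu) \in \mathbb Z^m \times \mathbb Z^m$ arise: we need $1 = a_1 < a_2 < \cdots < a_m \le n$ together with $\mu_i \ge 1$ and the ``packing'' condition that at each stage enough room remains, i.e. $a_{i+1} \le a_i + \mu_i$ would be too strong, but rather the condition that the blocks can be filled greedily --- concretely, writing the elements not yet used, one must be able to place $\mu_i - 1$ further elements of $M_i$ all exceeding $a_i$. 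The cleanest formulation: the data is admissible iff $\sum \mu_i = n$ and $a_{i} \le 1 + \sum_{j<i}(\mu_j - 1)$ for all $i$ is \emph{not} quite it either; I would instead just say $(a,\mu)$ is admissible iff it is realised by at least one classical partition, and show both the NN and NC normal forms below are defined exactly on the admissible set.

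Next I would construct the two normal forms. For the noncrossing side, given admissible $(a,\mu)$, build $\pi\NC$ greedily from the largest block-minimum downward: having placed larger-indexed blocks, block $M_i$ consists of $a_i$ together with the $\mu_i - 1$ smallest still-unused elements that exceed $a_i$; one checks this never gets stuck (by admissibility) and produces a noncrossing partition, because a new block $M_i$ is always an ``interval with holes'' sitting to the right of $a_i$ and any earlier-formed block it meets it must nest inside, never cross --- the noncrossing condition is exactly the statement that blocks are laminar, which the greedy rule enforces. For the nonnesting side, given the same $(a,\mu)$, build $\pi\NN$ by the dual greedy rule: block $M_i$ consists of $a_i$ together with $\mu_i - 1$ elements chosen as \emph{far right as the nonnesting condition permits}, equivalently one threads the arcs so that no arc is contained in another; here Athanasiadis's original argument (which I would cite as the content being reproduced) shows this is forced and well-defined precisely on admissible data.

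Then the bijection statement follows formally: both normal-form constructions are defined on the admissible set, each outputs a partition of the correct kind with the prescribed $(a,\mu)$, and conversely every classical noncrossing (resp.\ nonnesting) partition is \emph{equal} to the normal form for its own $(a,\mu)$ --- this last uniqueness is the crux and is proved by induction on $m$, peeling off the block $M_m$ of largest minimum (in the NC case) or running the greedy rule and observing at each step the choice is forced. Finally, type preservation: by Proposition~\ref{prop:what types are} the type of a classical partition for $A_{n-1}$ is determined by the multiset of block sizes, and the multiset $\{\mu_1,\dots,\mu_m\}$ is literally recorded by $\mu(\pi)$, so $\pi\NC$ and $\pi\NN$ share it and hence have equal type.

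The main obstacle I expect is pinning down the admissibility condition cleanly and verifying that the two greedy procedures succeed on exactly the same set of $(a,\mu)$; once ``well-defined and never stuck'' is established for both, the matching of outputs to statistics and the forced-choice uniqueness are routine inductions. I would lean on the fact that this type $A$ case is exactly Athanasiadis~\cite{AthNCNN}, so for the write-up it suffices to recall the construction and check the statistic bookkeeping, deferring the detailed verifications to that reference.
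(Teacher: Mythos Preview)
Your strategy is sound and matches the paper's: both argue that a classical noncrossing (resp.\ nonnesting) partition for $A_{n-1}$ is uniquely determined by $(a,\mu)$, with type preservation following immediately from Proposition~\ref{prop:what types are} since $\mu$ records the multiset of block sizes. The difference is in execution. Your noncrossing construction runs backward from $M_m$ (take $a_i$ plus the $\mu_i-1$ smallest unused elements above it), while your nonnesting construction is left vague (``as far right as the nonnesting condition permits'') and deferred to Athanasiadis. The paper instead gives two parallel descriptions that treat the noncrossing and nonnesting sides symmetrically: a chain-by-chain placement running forward from $M_1$, and a dot-by-dot algorithm that assigns $1,\ldots,n$ in order using an \emph{open block policy} --- add the current element to the open block whose current maximum is maximal (noncrossing) or minimal (nonnesting). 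This symmetry makes the well-definedness and uniqueness verifications identical in form for both sides, and as a byproduct yields the admissibility condition you were searching for, namely $a_{i-1}<a_i\le 1+\sum_{j<i}\mu_j$ together with $\sum_j\mu_j=n$ (Corollary~\ref{Achar}). Your backward greedy rule for the noncrossing side does produce the same partition as the paper's forward rule, so nothing is wrong there; but you should make the nonnesting construction equally explicit rather than gesturing at it, since that is exactly where the paper's open-block-policy formulation pays off.
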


The type-preserving assertion in Theorem~\ref{th:ir2.3}
is easy: by Proposition~\ref{prop:what types are} 
the tuple $\mu$ determines the type of any partition that yields it.
As for the bijection itself, we will sketch two different descriptions 
of the process for converting back and forth
between classical noncrossing and nonnesting partitions with the
same tuples $a$, $\mu$, with the intent that they will provide
the reader with complementary suites of intuition.  


To give an example, 
Figures~\ref{fig:Annnc} and~\ref{fig:Ancnn}
show step by step the operation of this bijection in each direction,
in the chain-by-chain fashion of our first proof.
In these figures, the chain $M_i$ being considered appears in bold.  
In the partitions being constructed, 
the elements which are shown with labels and thick dots are those 
less than or equal to the least element $a_i$ of the last placed chain $M_i$.
As we'll see in the proof, no subsequently placed chain can include 
an element less than $a_i$, so these labels are correct.  

\begin{figure}[ht]
\centering
\includegraphics{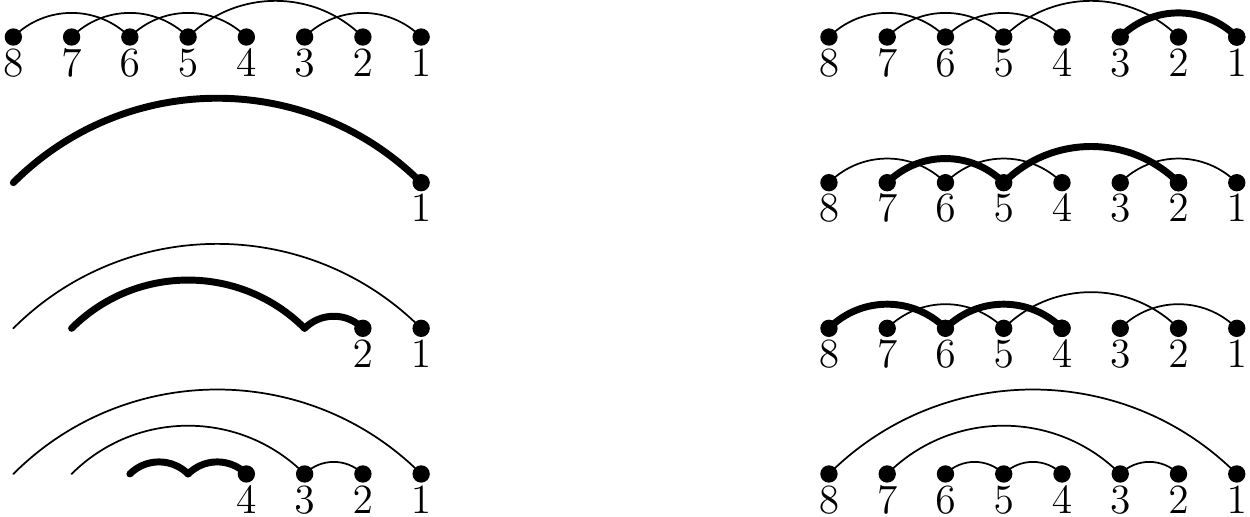}
\caption{The bijection of type $A$ running chain by chain (from left to right,
top to bottom) converting a nonnesting partition to a noncrossing one.
The partitions correspond to $a=(1,2,4)$, $\mu=(2,3,3)$.}
\label{fig:Annnc}
\end{figure}

\begin{figure}[ht]
\centering
\includegraphics{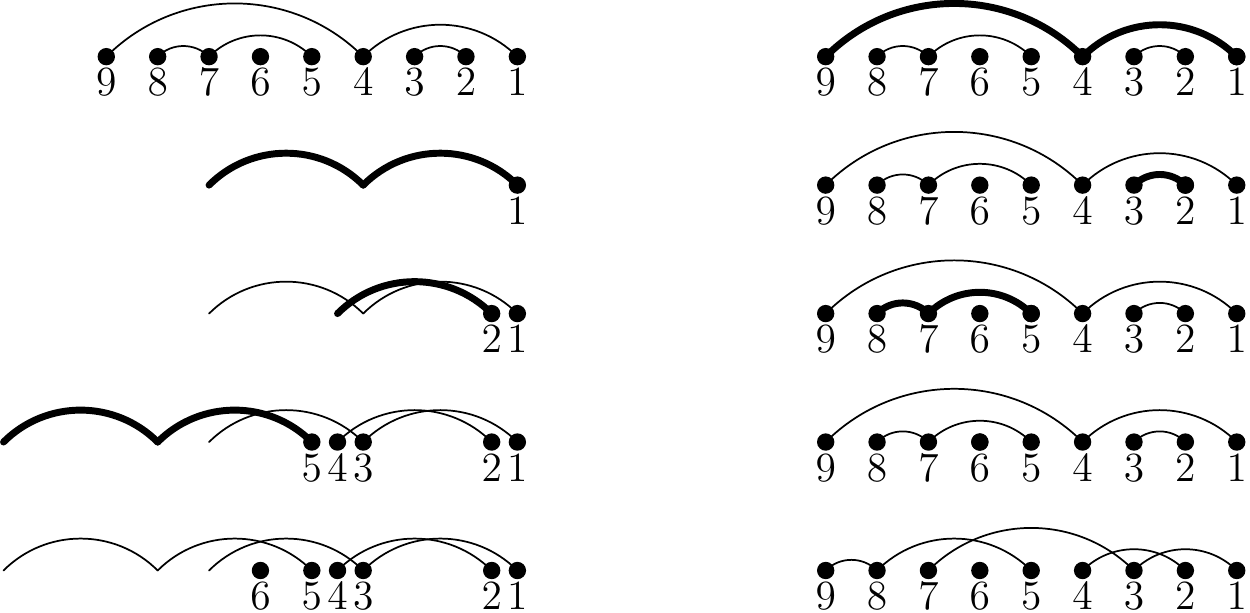}
\caption{The bijection of type $A$ running chain by chain (from left to right,
top to bottom) converting a noncrossing partition to a nonnesting one.
The partitions correspond to $a=(1,2,5,6)$, $\mu=(3,2,3,1)$.}
\label{fig:Ancnn}
\end{figure}

\begin{proof}[Proof of Theorem~\ref{th:ir2.3}: chain by chain]
By a {\em chain} we will mean a sort of incompletely specified block of a 
classical partition, or a connected component of a bump diagram:
a chain has a definite cardinality 
(or {\em length}) but may have unknown elements.
The lengths of the chains of~$\pi$ are determined by~$\mu(\pi)$.
We can view the chains as abstract unlabelled graphs in the plane,
and our task is that of labelling and thereby
positioning the vertices of these chains 
in such a way that the result is nonnesting or noncrossing, as desired.  

To compute the bijection we will inductively {\em place} the 
chains $M_1,\ldots,M_m$, in that order.  When we say a set
$\mc M$ of chains is placed, we mean that all pairwise order 
relations between the elements of the chains in~$\mc M$ are known.  
The effect is that if $\mc M$ is placed, 
we can draw the chains of~$\mc M$ in such a way that 
the bump diagram of any classical partition $\pi$ 
containing blocks whose elements have the order relations of~$\mc M$ 
can be obtained by drawing additional vertices and edges in the bump diagram, 
without redrawing the placed chains.  

Suppose we start with $\pi\NN$ and want to build the 
noncrossing diagram of $\pi\NC$.  Suppose that, for some $j\leq n$, 
we have placed $M_i$ for all $i < j$.  
To place $M_j$, we specify that its least element 
is to be the $a_j$\/th least element among the elements
of all of $M_1,\ldots,M_{j-1},M_j$, and that its remaining elements
are to be ordered in the unique possible way so that 
the placed chains form no crossing.  
In this instance, this means that all the elements of $M_j$ should
be placed consecutively, in immediate succession,
as in Figure~\ref{fig:Annnc}.

To build $\pi\NC$ from~$\pi\NN$ the procedure is the same, except 
that we must order the elements of~$M_j$ in the unique possible way
so that the placed chains form no nesting.  
Concretely, these order relations are the ones we get if every edge is drawn 
with its vertices the same distance apart on the line they lie on,
as in Figure~\ref{fig:Ancnn}.

Note that, in both directions, all the choices we made were unique, 
so the resulting partitions are unique.
\end{proof}

We remark that 
viewing each block of $\pi\NN$ as a chain with a fixed spacing is a particularly
useful picture in terms of the connection between nonnesting partitions and 
chambers of the Shi arrangement~\cite[\S5]{AthNCNN}.

\begin{proof}[Proof of Theorem~\ref{th:ir2.3}: dot by dot]
Let $M_1, \ldots, M_m$ be the blocks of a classical
nonnesting partition $\pi\NN$, such that
the least vertex of $M_i$ is $a_i$.  We describe
an algorithm to build up a
classical noncrossing partition $\pi\NC$ with the same tuples $a$ and $\mu$
by assigning the elements $1,\ldots,n$, in that order, to blocks.

The algorithm maintains a set $\mc O$ of
{\em open blocks}: an open block is a pair $(C,\kappa)$
where $C$ is a subset of the ground set of $\pi\NN$ and 
$\kappa$ a nonnegative integer.  We think of $C$ as a
partially completed block of $\pi\NC$
and $\kappa$ as the number of elements which must be added to $C$
to complete it.  
If $\mc O$ ever comes to contain an open block of form
$(C,0)$, we immediately drop this, for it represents a complete block.
When we begin constructing $\pi\NC$, the set $\mc O$
will be empty.  

Suppose we've assigned the elements $1,\ldots,j-1$ to blocks 
of $\pi\NC$ already, and want to assign $j$.  
If $j$ occurs as one of the $a_i$, then we add a new
singleton block $\{j\}$ to $\pi\NC$ and
add $(\{a_i\},\mu_i-1)$ to~$\mc O$.  
Otherwise, we choose an open block from $\mc O$ according to the
\begin{quote}
{\em Noncrossing open block policy}.  Given $\mc O$, choose
from it the open block $(C,\kappa)$ such that the maximum
element of $C$ is {\em maximal}.  
\end{quote}
We add $j$ to this open block, i.e.\ we replace the block
$C$ of~$\pi\NC$ by $C':=C\cup\{j\}$ and replace 
$(C,\kappa)$ by $(C',\kappa-1)$ within $\mc O$.
The desired partition $\pi\NC$ is obtained after assigning all dots. 

The central observation to make is that this policy
indeed makes $\pi\NC$ noncrossing, and there's a unique way to follow it.
Making a crossing of two edges $(a,c)$ and $(b,d)$, where $a<b<c<d$,
requires assigning $c$ to an open block whose greatest element is then
$a$, when there also exists one with greatest element $b>a$, 
which is witnessed to have been open at the time by its later acquisition of $d$;
this is in contravention of the policy.  

To recover $\pi\NN$ uniquely from $\pi\NC$, the same algorithm 
works, with one modification: instead of the noncrossing
open block policy we use the 
\begin{quote}
{\em Nonnesting open block policy}.  Given $\mc O$, choose
from it the open block $(C,\kappa)$ such that the maximum
element of $C$ is {\em minimal}.  
\end{quote}

This policy makes $\pi\NN$ nonnesting and unique for a similar reason.
If there are nested edges $(a,d)$ and $(b,c)$, where $a<b<c<d$,
then $c$ was added to the block containing $b$ when by policy it 
should have gone with $a$, which was in an open block.
\end{proof}

A careful study of either of these proofs provides a useful characterisation
of the pairs of tuples $a,\mu$ that are the statistics of
a classical nonnesting or noncrossing partition of type $A$.

\begin{corollary}\label{Achar}
Suppose we are given a pair of tuples of positive integers
 $a = (a_1,...,a_{m_1})$, $\mu = (\mu_1,...,\mu_{m_2})$ and 
let $n > 0$. Define $a_0 = 0$ and $\mu_0 = 1$. 
Then, $a$ and $\mu$ represent a classical noncrossing or nonnesting 
partition for $A_{n-1}$ if and only if
\begin{enumerate} 
\item $m_1 = m_2 = m$;
\item $n = \sum_{k=1}^{m} \mu_{k}$ ; and
\item $a_{i-1} < a_i \leq \sum_{k=0}^{i-1} \mu_{k}$ for $i=1,2,...,m$.
\end{enumerate}
\end{corollary}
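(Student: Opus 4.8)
The plan is to extract the characterisation directly from the ``dot by dot'' algorithm in the proof of Theorem~\ref{th:ir2.3}. Conditions (1) and (2) are immediate necessary conditions: a classical partition for $A_{n-1}$ has $m$ blocks for some common $m$, each block $M_i$ contributes one least element $a_i$ and one size $\mu_i$, and the block sizes sum to $n=|[n]|$. For condition (3), the left inequality $a_{i-1}<a_i$ is just the defining property of the ordering $\lel$ of blocks by least positive element. The right inequality is the heart of the matter, and I would argue it from the bijective algorithm: when we process dot $j=a_i$ and open the new singleton block $\{a_i\}$, all of $M_1,\dots,M_{i-1}$ must already be placed (since their least elements $a_1,\dots,a_{i-1}$ are all $<a_i$ and were processed earlier), so the $\mu_0+\mu_1+\dots+\mu_{i-1}$ dots available — I'm using the convention $\mu_0=1$ to account for the dot $j=a_i$ itself being slot number $a_i$ — bound $a_i$ from above. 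More precisely, $a_i$ is by construction the $(\,\cdot\,)$th smallest element among $M_1\cup\dots\cup M_i$, and the number of elements of $M_1\cup\dots\cup M_{i-1}$ that can be $\le a_i$ is at most $\mu_1+\dots+\mu_{i-1}$, whence $a_i\le 1+\mu_1+\dots+\mu_{i-1}=\sum_{k=0}^{i-1}\mu_k$.

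For the converse — that any $(a,\mu)$ satisfying (1)--(3) actually arises — I would run the algorithm (say the noncrossing open block policy) on the given data and check it never gets stuck. At step $j$: if $j=a_i$ for some $i$ we open a block, which is always legal; if $j$ is not any $a_i$ we must have a nonempty set $\mc O$ of open blocks from which to pick. So the key point is that $\mc O\neq\emptyset$ exactly when $j\notin\{a_1,\dots,a_m\}$. Counting: after processing dots $1,\dots,j$, the number of dots placed into blocks equals $j$, the number of block-openings so far is $\#\{i : a_i\le j\}$, say this is $p$, and the total capacity opened is $\sum_{i\le p}\mu_i$; the blocks currently in $\mc O$ are nonempty iff $j<\sum_{i\le p}\mu_i$, i.e.\ iff not all opened capacity is used up. If $j+1$ is not an $a_i$ then $a_{p}\le j<a_{p+1}$ (with $a_{p+1}=+\infty$ if $p=m$), and I claim $j<\sum_{i=1}^p\mu_i$: indeed if $p<m$ then condition (3) applied to index $p+1$ gives $a_{p+1}\le\sum_{k=0}^p\mu_k$, hence $j<a_{p+1}\le 1+\sum_{i=1}^p\mu_i$, so $j\le\sum_{i=1}^p\mu_i$, and equality $j=\sum_{i=1}^p\mu_i$ would force $j+1=a_{p+1}$, contradiction; if $p=m$ then $j<n=\sum_{i=1}^m\mu_i$ since $j+1\le n$ and $j+1$ is not $a_i$ so in particular $j<n$. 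Either way $\mc O\neq\emptyset$, so the algorithm completes and produces a genuine classical noncrossing partition (and symmetrically a nonnesting one) with statistics $(a,\mu)$.

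I expect the main obstacle to be bookkeeping rather than conceptual: pinning down exactly what ``$a_i$ is the $a_i$th least element of $M_1\cup\dots\cup M_i$'' means once the lower blocks are only partially filled at the moment $a_i$ is processed, and making the off-by-one with the $\mu_0=1$, $a_0=0$ conventions airtight in both the necessity and sufficiency directions. One clean way to finesse this is to phrase everything in terms of the running partial sums $\Sigma_p=\sum_{k=0}^p\mu_k$ and the counting identity ``(dots placed) $=$ (sum of sizes of complete blocks) $+$ (dots in open blocks)'' after step $j$, which converts the entire argument into elementary inequalities among the $a_i$ and the $\Sigma_p$, sidestepping any need to revisit the geometry of bump diagrams.
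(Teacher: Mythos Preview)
Your proposal is correct and follows exactly the approach the paper indicates: the paper does not give a standalone proof of Corollary~\ref{Achar} but simply remarks that ``a careful study of either of these proofs'' (the chain-by-chain or dot-by-dot descriptions of Theorem~\ref{th:ir2.3}) yields the characterisation, and you have carried out precisely that careful study using the dot-by-dot algorithm. Your necessity argument for the right inequality in~(3) could be streamlined slightly by observing directly that every element of $\{1,\ldots,a_i-1\}$ lies in some $M_j$ with $a_j<a_i$, hence $j<i$, so $a_i-1\le|M_1\cup\cdots\cup M_{i-1}|=\mu_1+\cdots+\mu_{i-1}$; but this is only a cosmetic simplification of what you already wrote.
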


\subsection{Type $C$}\label{ssec:type C}
In the classical reflection groups other than $A_n$, the negative
elements of the ground set must be treated, and so it will be useful
to have some terminology to deal with these.

\begin{definition}\label{def:block signs}
A {\em positive block} of a classical partition $\pi$ 
is a block of $\pi$ that contains some
positive integer; similarly a {\em negative block} contains a negative integer.  
A \emph{switching block} of $\pi$ is a block of $\pi$ that contains
both positive and nonpositive elements, 
and a {\em nonswitching block} is one that 
contains only positive elements or only nonpositive elements.   

A single edge of the bump diagram is {\em positive} or {\em negative} or
{\em switching} or {\em nonswitching} if it would have
those properties as a block of size 2.  
\end{definition} 

Let $\pi$ be a classical partition for $C_n$.
Given $\pi$, let $M_{1}\lel\cdots\lel M_{m}$ 
be the positive nonswitching blocks of $\pi$, and
$a_i$ the least element of $M_i$.  Let $\mu_i$ be the cardinality of $M_i$.  
These two tuples are reminiscent of type $A$.
Let $P_{1}\lel\cdots\lel P_{k}$ be the switching blocks of 
$\pi$, let $p_{i}$ be the least positive element of $P_{i}$,
and let $\nu_i$ be the number of positive elements of $P_i$.
Define the three statistics
$  a(\pi) = (  a_{1}, \ldots,   a_{m}) $,
$\mu(\pi) = (\mu_{1}, \ldots, \mu_{m}) $,
$\nu(\pi) = (\nu_{1}, \ldots, \nu_{k}) $.
We have 
\begin{equation*}
n =  \sum_{i=1}^{m} \mu_{i} + \sum_{j=1}^{k} \nu_{j}.
\end{equation*}

\begin{theorem}\label{th:ir2.4}
The statistics $(a,\mu,\nu)$ establish a type-preserving bijection for $C_n$.
\end{theorem}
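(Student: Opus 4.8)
\textbf{Proof proposal for Theorem~\ref{th:ir2.4}.}
The plan is to mimic the type~$A$ argument, building a classical noncrossing partition for $C_n$ from a classical nonnesting one (and conversely) by scanning the positive ground set $1 < \cdots < n$ dot by dot, while keeping careful track of the switching blocks and their negative halves. The statistics $(a,\mu,\nu)$ are a natural bookkeeping device: the positive nonswitching blocks $M_i$ behave exactly as the blocks did in type~$A$, and the switching blocks $P_j$ carry the extra data $\nu_j$ recording how many of their elements are positive (the negative elements being forced by the negation symmetry, plus possibly a shared zero block). First I would deal with the type-preserving claim: by Proposition~\ref{prop:what types are}, knowing the multiset of block sizes and the size of the zero block determines the conjugacy class in $\Pi(C_n)$, and all of this is recoverable from $(\mu,\nu)$ and the value of $n$ --- the zero block arises precisely when $\sum\nu_j$ has a particular parity relation forced by the switching blocks pairing up or one of them passing through $0$ --- so equality of these tuples forces equal type.

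Next I would set up the dot-by-dot algorithm with an expanded notion of open block. As in type~$A$, maintain a set $\mc O$ of open blocks $(C,\kappa)$, but now $C$ may be a partially completed positive nonswitching block (to be closed while still positive) or a partially completed switching block (which, once its $\nu_j$ positive elements are in place, ``crosses over'' and must be completed symmetrically among the negative elements, or else absorbed into the zero block). When the scan reaches a dot $j$ equal to some $a_i$ we open a new nonswitching block of length $\mu_i$; when it reaches some $p_j$ we open a new switching block that will take $\nu_j$ positive elements. Otherwise we assign $j$ to an existing open block according to the noncrossing open block policy (choose the open block whose greatest element is maximal) in the $NN\to NC$ direction, and the nonnesting open block policy (greatest element minimal) in the $NC\to NN$ direction. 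The key structural point, to be verified, is that on the circular ground set $-1 < \cdots < -n < 1 < \cdots < n$ of Definition~\ref{def:block signs}'s ambient type~$C$ order, the negative halves of the switching blocks are completely determined by their positive halves together with the relative order in which the switching blocks were opened, so the scan over positive dots really does pin down the whole partition; and that the same crossing/nesting-avoidance argument as in type~$A$ shows the policy produces a noncrossing (resp.\ nonnesting) diagram and does so uniquely.

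The main obstacle I expect is exactly the interaction between the switching blocks and the circular geometry: when the scan crosses from the positive dots back around through the negative dots, a switching block that was ``open'' on the positive side may create a crossing or nesting with another switching block's negative half or with a negative nonswitching block (the mirror image of some $M_i$), and one must check that the noncrossing/nonnesting open block policies, applied only to the positive scan, already preclude this. Concretely, the danger is a configuration $a < b$ among positive dots with edges or convex hulls of $\pm$-paired blocks that interleave on the circle; I would handle it by the circular reformulation given in \S\ref{ssec:classical NC and NN} (two blocks are noncrossing iff their convex hulls in the circular diagram are disjoint), reducing the $C_n$ claim to the already-established type~$A$ claim applied to the linear arrangement obtained by cutting the circle at $0$, with the switching blocks playing the role of blocks that ``wrap around'' the cut and whose wrap-around length is governed by $\nu_j$. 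Finally I would close by noting that, just as in Corollary~\ref{Achar}, the argument yields a characterisation of which triples $(a,\mu,\nu)$ actually occur, and that all choices made in the algorithm were forced, giving uniqueness in both directions and hence the asserted bijection.
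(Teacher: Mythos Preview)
Your proposal has a genuine gap at the heart of the dot-by-dot algorithm. You write ``when it reaches some $p_j$ we open a new switching block that will take $\nu_j$ positive elements,'' but the values $p_j$ (the least positive elements of the switching blocks) are \emph{not} among the statistics $(a,\mu,\nu)$. Only $\nu$ is recorded for the switching blocks; the $p_j$ are not preserved by the bijection and in general differ between the noncrossing and nonnesting partitions with the same $(a,\mu,\nu)$. So an algorithm that waits for the scan to hit $p_j$ before opening the $j$th switching block cannot be run from the statistics alone, and your construction as written does not define a map.

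The paper's fix is simple and worth internalising: rather than opening switching blocks mid-scan, one initialises $\mc O$ at the outset with all $k$ switching blocks already open, each seeded with a fictive ``negative'' element $P_i^-$ and capacity $\nu_i$. The scan over $1,\ldots,n$ then either opens a new nonswitching block when $j=a_i$ or assigns $j$ to an open block by the appropriate policy; the $p_j$ emerge as \emph{output} of this process, not input. One must also specify how the fictive elements compare to one another (and to real elements) for the policies to be well-defined, and the two orderings differ between the noncrossing and nonnesting directions. Once the positive side is built, the negative side is its mirror, and the incomplete switching halves pair up as $P_i^*\leftrightarrow -P_{k+1-i}^*$; this pairing is forced and is where the zero block (if $k$ is odd) appears. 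Your discussion of the ``main obstacle'' and the circular cut is more elaborate than necessary: once the switching blocks are handled via fictive initial elements, the crossing/nesting verification on the positive side is literally the type~$A$ argument, and the pairing on the negative side is unique by symmetry.
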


Figure~\ref{fig:typeCbij} illustrates a pair of partitions
related under the resulting bijection.  

\begin{figure}[ht]
\centering
\includegraphics{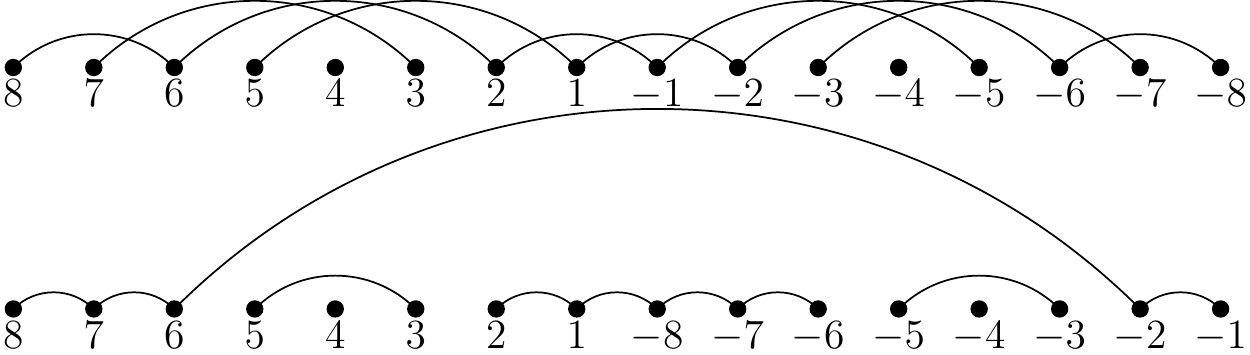}
\caption{The type $C$ nonnesting (top) and noncrossing (bottom)
partitions corresponding to $a=(3,4)$, $\mu=(2,1)$, $\nu=(2,3)$.}
\label{fig:typeCbij}
\end{figure}

\begin{proof}
We state a procedure for converting back and forth
between classical noncrossing and nonnesting partitions 
that preserve the values $a$, $\mu$, and~$\nu$.
Suppose we start with a partition $\pi$, be it 
noncrossing $\pi\NN$ or nonnesting $\pi\NC$, so that we want
to find the partition $\pi'$, being $\pi\NC$ or $\pi\NN$ respectively. 
From $a$, $\mu$, and $\nu$ we inductively construct the positive 
side of~$\pi'$, that is the partition it induces on the set of positive indices $[n]$,
which will determine $\pi'$ by invariance under negation. 

First we describe it from the chain-by-chain viewpoint.
In the bump diagram of $\pi$, consider the labelled 
connected component representing $P_i$, 
which we call the {\em chain} $P_i$.  
Let the ({\em unlabelled}) {\em partial chain} $P_i'$  
be the abstract unlabelled connected graph obtained 
from the chain $P_i$ by removing its 
nonpositive nonswitching edges and nonpositive vertices, leaving the unique
switching edge incomplete, \textit{i.e.}\ 
drawn as a partial edge with just one incident vertex, 
and by dropping the labels.
Notice how
the tuple $\nu$ allows us to draw these partial chains.  
The procedure we followed for type $A$ will generalise to this case,
treating the positive parts of the switching edges first. 

We want to obtain the 
bump diagram for $\pi'$, so we begin by using $\nu$ to
partially draw the chains representing its switching blocks: 
we draw only the positive edges
(switching and nonswitching) of every chain, leaving the unique
switching edge incomplete. 
This is done by reading $\nu$ from back to front and inserting, 
each partial switching chain $P_i'$ in turn
with its rightmost dot placed to the right of
all existing chains, analogously to type $A$.  
In the noncrossing case, we end up with
every vertex of $P_i'$ being strictly to the right of
every vertex of $P_j'$ for $i<j$.  In the nonnesting case,
the vertices of the switching edges will be
exactly the $k$ first positions from right to left 
among all the vertices of $P_1',\dots,P_k'$. 
It remains to place the nonswitching chains $M_{1}, M_{2},\ldots,M_{m}$, 
and this we do also as in the type $A$ bijection, except that at each
step, we place the rightmost vertex of $M_{j}$ so as to become 
the $a_j$\/th vertex, counting from right to left, relative to 
the chains $M_{j-1},\ldots,M_{1}$ and the partial chains
$P_1',P_2',\dots,P_k'$ already placed.

To take the dot-by-dot viewpoint, 
the type $A$ algorithm can be used with only one modification,
namely that $\mc O$ begins nonempty.  It is initialised from $\nu$, as
$$\mc O = \{(\{P_i^-\},\nu_i) : i=1,\ldots,k\},$$ 
where $P_i^-$ is a fictive element that represents the negative elements
of $P_i$ which are yet to be added.  
We must also specify how these fictive elements compare, for use
in the open block policies.  A fictive element is always less than a real element.
In the noncrossing case $P_i^->P_j^-$ iff $i<j$, whereas in the nonnesting
case $P_i^->P_j^-$ iff $i>j$; the variation assures that
$P_1^-$ is chosen first in either case.

Now we have the positive side of~$\pi'$. 
We copy these blocks down again with all parts negated,
and end up with a set of incomplete switching blocks $P_1^*,\ldots,P_k^*$ 
on the positive side and another equinumerous set $-P_1^*,\ldots,-P_k^*$
on the negative side that we need to pair up and connect with edges in
the bump diagram.  

There is a unique way to connect these incomplete blocks to
get the partition~$\pi'$, be it $\pi\NC$ or $\pi\NN$.
In every case $P_i^*$ gets connected with $-P_{k+1-i}^*$, 
and in particular symmetry under negation is attained. 
If there is a zero block it arises from $P_{(k+1)/2}^*$.

Finally, $\pi$ and~$\pi'$ have the same type.
Since the $P_i^*$ are paired up the same way in each, including any zero block,
$\mu$ and $\nu$ determine the multiset of block sizes of 
$\pi$ and~$\pi'$ and the size of any zero block, in identical fashion 
in either case.  Then this is Proposition~\ref{prop:what types are}.
\end{proof} 

Again, a careful look at the preceding proof
gives the characterization of the tuples that describe 
classical noncrossing and nonnesting partitions for type~$C$.
\begin{corollary}\label{Cchar}
Suppose we are given some tuples of positive integers
 $a = (a_1,...,a_{m_1})$, $\mu = (\mu_1,...,\mu_{m_2})$, 
$\nu=(\nu_1,...,\nu_k)$ and 
let $n > 0$. Define $a_0 = 0$ and $\mu_0 = 1$. 
Then, $a$, $\mu$ and $\nu$ represent a classical 
noncrossing or nonnesting partition for $C_n$ if and only if
\begin{enumerate}
\item $m_1 = m_2 = m$;
\item $n =  \sum_{i=1}^{m} \mu_{i} + \sum_{j=1}^{k} \nu_{j}$;
\item $a_{i-1} < a_i \leq \sum_{k=0}^{i-1} \mu_{k}
 + \sum_{j=1}^{k} \nu_{j}$ for $i=1,2,...,m$.
\end{enumerate}
\end{corollary}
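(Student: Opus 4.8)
The plan is to read this characterisation off the dot-by-dot construction used in the proof of Theorem~\ref{th:ir2.4}, exactly as Corollary~\ref{Achar} is read off the proof of Theorem~\ref{th:ir2.3}; the only genuinely new feature here is the $k$ fictive open blocks $(\{P_i^-\},\nu_i)$ with which $\mc O$ is initialised. Since the bijection of Theorem~\ref{th:ir2.4} preserves the triple $(a,\mu,\nu)$, such a triple represents a classical noncrossing partition for $C_n$ if and only if it represents a classical nonnesting one, so it suffices to characterise the existence of either; in fact the argument below does not care which open-block policy is used. Throughout I abbreviate $N_i=\mu_0+\mu_1+\cdots+\mu_{i-1}+\nu_1+\cdots+\nu_k$ (with $\mu_0=1$), so that condition~(3) reads $a_{i-1}<a_i\le N_i$, and I note that $N_{m+1}=1+n$ once~(2) holds.

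For necessity, let $\pi$ be any classical partition for $C_n$ (noncrossing or nonnesting) with positive nonswitching blocks $M_1\lel\cdots\lel M_m$ of sizes $\mu_i$ and least elements $a_i$, and with switching blocks whose positive parts have sizes $\nu_1,\dots,\nu_k$. Conditions~(1) and~(2) are immediate, the latter being the identity displayed just before Theorem~\ref{th:ir2.4}. For~(3): $a_{i-1}<a_i$ holds because the $M_i$ are listed in $\lel$-order, and every positive index below $a_i$ lies in one of $M_1,\dots,M_{i-1}$ or in the positive part of a switching block, since $M_i,\dots,M_m$ all have least element $\ge a_i$. Hence $a_i-1\le \mu_1+\cdots+\mu_{i-1}+\nu_1+\cdots+\nu_k=N_i-1$, which is the upper bound in~(3). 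This argument uses only the block structure, so it applies verbatim to both kinds of partition.

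For sufficiency I would run the dot-by-dot algorithm of the proof of Theorem~\ref{th:ir2.4}, with either policy, placing the positive indices $1,\dots,n$ in order from the initial state $\mc O=\{(\{P_i^-\},\nu_i):i=1,\dots,k\}$, and keep track of the deficit $D_j$, the total number of slots still unfilled among the open blocks after index $j$ has been placed. Placing an index equal to some $a_i$ changes the deficit by $\mu_i-1$ and every other index changes it by $-1$, so with $c(j)=|\{i:a_i<j\}|$ one computes $D_{j-1}=N_{c(j)+1}-j$; in particular $D_0=\nu_1+\cdots+\nu_k$ and $D_n=N_{m+1}-(n+1)=0$ by~(2). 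The algorithm stalls only if some index $j\notin\{a_1,\dots,a_m\}$ finds $\mc O=\emptyset$, i.e.\ $D_{j-1}=0$; but then either $c(j)<m$, in which case $a_{c(j)+1}$ exists, exceeds $j$, and by~(3) satisfies $j<a_{c(j)+1}\le N_{c(j)+1}=D_{j-1}+j$, forcing $D_{j-1}>0$; or $c(j)=m$, in which case $D_{j-1}=1+n-j\ge 1$ outright. So the algorithm never stalls and, since $D_n=0$, it finishes with every block complete; pairing each switching block with its negative exactly as in the proof of Theorem~\ref{th:ir2.4} then turns the result into a genuine classical partition for $C_n$ with the prescribed statistics $(a,\mu,\nu)$, which is noncrossing under the noncrossing policy and nonnesting under the other.

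The step I expect to need the most care is the bookkeeping around $\nu$ in the sufficiency argument: $\nu$ records only the positive sizes of the switching blocks, not their least positive elements $p_i$, so one must appeal to the placement rule in the proof of Theorem~\ref{th:ir2.4} --- in the nonnesting case the switching edges occupy the $k$ extreme positions, in the noncrossing case they are spread out --- to see that the partition produced is uniquely determined and has the $p_i$ consistent with the rest of the data. Once that is in hand the deficit computation is routine, and since it never refers to \emph{which} open block receives a given dot it handles the noncrossing and nonnesting cases simultaneously; type preservation requires no separate argument, having already been settled through Proposition~\ref{prop:what types are} inside the proof of Theorem~\ref{th:ir2.4}.
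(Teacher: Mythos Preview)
Your proposal is correct and follows exactly the route the paper intends: the paper states only that ``a careful look at the preceding proof gives the characterization,'' and you have supplied that careful look by tracking the dot-by-dot algorithm of Theorem~\ref{th:ir2.4} with the fictive open blocks and a deficit count. The deficit bookkeeping $D_{j-1}=N_{c(j)+1}-j$ is the natural way to make the ``never stalls'' claim precise, and since your argument is policy-independent it handles both the noncrossing and nonnesting cases at once, which is all the corollary asks.
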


\subsection{Type $B$}\label{ssec:type B}
We will readily be able to modify our type~$C$ bijection
to handle type~$B$.  Indeed, if it weren't for our concern about type
in the sense of Definition~\ref{def:type},
we would already possess a bijection for type~$B$, differing from the
type~$C$ bijection only in pairing up the incomplete switching blocks
in a way respecting the presence of the element~0.  Our task is thus
to adjust that bijection to recover the type-preservation.

If $\pi$ is a classical partition for $B_n$, 
we define the tuples $a(\pi)$, $\mu(\pi)$ and $\nu(\pi)$ as in type~$C$. 

Notice that classical noncrossing partitions for $B_n$ and for~$C_n$
are identical, and that the strictly positive part of any 
classical nonnesting partition for~$B_n$ is also
the strictly positive part of some nonnesting $C_{n}$-partition, 
though not necessarily one of the same type. 
Thus Corollary~\ref{Cchar} characterises the 
classical noncrossing or nonnesting partitions for~$B_n$
just as well as for~$C_n$. 

Suppose $\pi$ is a classical nonnesting partition for~$B_n$.
In two circumstances its tuples $a(\pi)$, $\mu(\pi)$, $\nu(\pi)$ 
also describe a unique nonnesting partition for~$C_{n}$ of the same type:
to be explicit, this is when $\pi$ does not contain a zero block, 
and when the unique switching chain in $\pi$ is the one representing the 
zero block. 
If $P_{1}\lel\cdots\lel P_{k}$ are the switching blocks of~$\pi$,
then $\pi$ contains a zero block and more than one switching chain if and only if 
$k$ is odd and $k > 1$.
We notice that $P_{k}$ must be the zero block.  On the other hand, if
$\pi^{\rm C}$ is a classical nonnesting partition for $C_{n}$, the zero block must be
$P_{(k+1)/2}$.  Reflecting this, our bijection will
be forced to reorder $\nu$ to achieve type preservation.

Generalising our prior machinery, we will say that two lists $S\NC$ and~$S\NN$ of
partition statistics, in that order, and a list $\Sigma=(\sigma_i)$ of bijections
establish a (type-preserving) bijection 
for a classical reflection group $W$ if,  
given either a classical noncrossing partition $\pi\NC$ or
a classical nonnesting partition $\pi\NN$ for $W$, 
the other one exists uniquely such that 
$\sigma_i(s_i\NC(\pi\NC)) = s_i\NN(\pi\NN)$
for all $i$ (and furthermore $\pi\NC$ and $\pi\NN$ have the same type).

Suppose we have a tuple $\nu = (\nu_{1},\ldots,\nu_{k}) $ with $k$ odd. 
Define the reordering 
$$\sigma_B(\nu) = 
(\nu_{1},\ldots,\nu_{(k-1)/2},\nu_{(k+3)/2},\ldots,\nu_{k},\nu_{(k+1)/2}).$$ 
If $k$ is not odd then let 
$\sigma_B(\nu) = \nu$.  
Clearly $\sigma_B$ is bijective. For explicitness, the inverse for $k$ odd
is given by
$$\sigma_{B}^{-1}(\nu) = 
(\nu_{1},\ldots,\nu_{(k-1)/2},\nu_{k},\nu_{(k+1)/2},\ldots,\nu_{k-1})$$ 
and for $k$ even $\sigma_B^{-1}(\nu) = \nu$.

\begin{theorem}\label{th:ir2.5}
The lists of statistics $(a,\mu,\nu)$ and $(a,\mu,\nu)$
establish a type-preserving bijection for $B_n$
via the bijections $(\id,\id,\sigma_B)$.
\end{theorem}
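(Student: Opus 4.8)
The plan is to reduce Theorem~\ref{th:ir2.5} to Theorem~\ref{th:ir2.4} by carefully tracking how the presence of a zero block forces a permutation of the $\nu$-tuple. First I would recall the two observations already laid out before the statement: that classical noncrossing partitions for $B_n$ and $C_n$ coincide, and that the strictly positive part of any classical nonnesting $B_n$-partition is the strictly positive part of some classical nonnesting $C_n$-partition. This means the entire ``positive side'' construction in the proof of Theorem~\ref{th:ir2.4} can be reused verbatim for type $B$; the only thing that changes is the final step, where the incomplete switching blocks $P_1^*,\ldots,P_k^*$ and their negatives get paired up and connected.

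Next I would analyze that pairing step in detail for type $B$. In type $C$ the pairing is $P_i^* \leftrightarrow -P_{k+1-i}^*$, so the zero block (when $k$ is odd) arises from $P_{(k+1)/2}^*$, i.e. the \emph{median} switching chain in the $\lel$-order. For a nonnesting $B_n$-partition, by contrast, the zero block must be $P_k$, the $\lel$-\emph{last} switching chain (this is exactly the fact stated just before the theorem: $P_k$ must be the zero block when $k$ is odd and $k>1$). So the tuple $\nu$ read off a nonnesting $B_n$-partition, when fed into the type $C$ machine, would produce a partition whose zero block has size $\nu_{(k+1)/2}$ rather than the correct $\nu_k$. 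The fix is precisely to cyclically move the last half of $\nu$ so that the old last entry lands in the median position: this is the definition of $\sigma_B$. I would verify directly that $\sigma_B$ sends the position-$k$ entry to position $(k+1)/2$ and is a bijection on tuples of odd length $k$ (the stated inverse formula can be checked by inspection), and that for $k$ even — where there is no zero block at all and $B$ and $C$ nonnesting partitions with these statistics agree — $\sigma_B$ is the identity, as required.

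With these pieces in hand the argument assembles as follows. Given a classical nonnesting partition $\pi\NN$ for $B_n$ with statistics $(a,\mu,\nu)$, form $\nu' = \sigma_B(\nu)$; then $(a,\mu,\nu')$ are the statistics of a classical nonnesting partition $\pi^{\mathrm C}$ for $C_n$, and by Theorem~\ref{th:ir2.4} there is a unique classical noncrossing partition $\pi\NC$ (for $C_n$, hence for $B_n$, since these coincide) with statistics $(a,\mu,\nu')$. Running the same reasoning backwards — starting from $\pi\NC$ for $B_n$, reading off $(a,\mu,\nu')$, applying $\sigma_B^{-1}$ to recover the $B$-appropriate ordering, and using Theorem~\ref{th:ir2.4} in the other direction — gives uniqueness in the reverse direction. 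This establishes that $(a,\mu,\nu)$ and $(a,\mu,\nu)$ with the bijections $(\id,\id,\sigma_B)$ set up a bijection for $B_n$. For type preservation, I would invoke Proposition~\ref{prop:what types are}: the multiset of block sizes of $\pi\NN$ is determined by $\mu$ together with $\nu$, and the size of the zero block is $\nu_k$ on the $B$ side; the multiset of block sizes of $\pi\NC$ is determined by $\mu$ together with $\nu' = \sigma_B(\nu)$, and since $\sigma_B$ merely permutes the entries of $\nu$ and places the correct value $\nu_k$ in the median position from which the $C$-zero-block is built, both partitions have the same multiset of block sizes and the same zero-block size, hence the same type.

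The main obstacle I expect is bookkeeping rather than conceptual: one must be scrupulous about \emph{which} switching chain becomes the zero block in each of the two conventions and about the direction in which $\lel$ orders the chains, since an off-by-one or a reversed ordering would produce the wrong permutation. In particular I would want to double-check the boundary case $k=1$ (a single switching chain, which is then the zero block in both conventions, and where $\sigma_B$ is the identity since $(k-1)/2 = 0$ and the ``middle'' entry is already last) to confirm the formula for $\sigma_B$ degenerates correctly, and the case where $\pi$ has no zero block at all, where the claim is simply that the type $C$ bijection already works unchanged.
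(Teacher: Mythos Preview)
Your overall strategy matches the paper's: reduce to the type~$C$ machinery, observe that the only discrepancy is which switching chain becomes the zero block (the median one $P_{(k+1)/2}$ in type~$C$, the last one $P_k$ in nonnesting type~$B$), and permute $\nu$ accordingly. However, you have the direction of $\sigma_B$ reversed. From the definition
\[
\sigma_B(\nu) = (\nu_1,\ldots,\nu_{(k-1)/2},\nu_{(k+3)/2},\ldots,\nu_k,\nu_{(k+1)/2}),
\]
it is the \emph{median} entry $\nu_{(k+1)/2}$ that is moved to the last position, not the other way round; the map that sends the last entry to the median position is $\sigma_B^{-1}$. The theorem's convention is $\sigma_B(\nu(\pi\NC)) = \nu(\pi\NN)$, so starting from $\pi\NN$ the correct procedure is to form $\nu' = \sigma_B^{-1}(\nu(\pi\NN))$ and then locate the noncrossing partition with $\nu$-statistic $\nu'$; your ``form $\nu' = \sigma_B(\nu)$'' goes the wrong way (and likewise your reverse direction should apply $\sigma_B$, not $\sigma_B^{-1}$). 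Your own closing paragraph flags exactly this bookkeeping hazard, and this is where it bites.

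One smaller point: you assert without argument that $(a,\mu,\nu')$ are the statistics of some classical nonnesting partition for~$C_n$. The paper handles this cleanly by appealing to Corollary~\ref{Cchar}: the conditions characterising valid $(a,\mu,\nu)$ tuples (which, as noted before the theorem, are the same for $B_n$ and~$C_n$) do not depend on the order of the entries of~$\nu$, so any permutation of a valid $\nu$ remains valid. This is the step that makes the bijection well-defined in both directions, and it should be invoked explicitly. With the direction of $\sigma_B$ corrected and this appeal to Corollary~\ref{Cchar} inserted, your argument is complete and essentially coincides with the paper's.
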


\begin{proof}
We use the same procedures as in type~$C$ to convert back and forth
between classical nonnesting and noncrossing partitions, except that
we must rearrange $\nu$ and handle the zero block appropriately,
if it is present. 
When constructing a nonnesting partition
we connect the incomplete switching blocks differently:
in the notation of the dot-by-dot description,
$P_k^*$ must be connected to~$-P_k^*$ and the dot 0,
so that we connect $P_i^*$
to~$-P_{k-i}^*$ for $1 \leq i < k$.  
The conditions of Corollary~\ref{Cchar}, which as we noted above
characterise type~$B$ classical noncrossing and nonnesting partitions,
don't depend on the order of~$\nu$.
So if tuples $a$, $\mu$, and $\nu$ satisfy them then so
do $a$, $\mu$ and~$\sigma_B(\nu)$ (or $\sigma_B^{-1}(\nu)$). 
Thus our statistics establish a bijection between $NC^{\rm cl}(B_n)$ and $NN^{\rm cl}(B_n)$.

Type is preserved, by the definition of $\sigma$ 
and the preceding discussion.
\end{proof}

Figure~\ref{fig:typeBbij} illustrates a pair of partitions
related under the bijection.  

\begin{figure}[ht]
\centering
\includegraphics{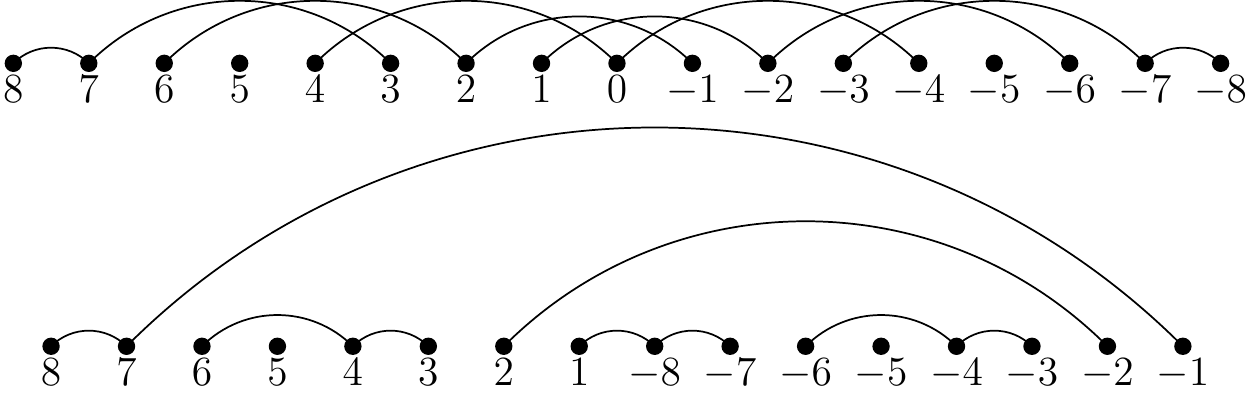}
\caption{The type $B$ nonnesting (top) and noncrossing (bottom)
partitions corresponding to $a = (3,5)$, $\mu=(3,1)$, 
and respectively $\nu=(1,2,1)$ and $\nu=(1,1,2)$.  
Note that $\sigma_B((1,1,2))=(1,2,1)$.
These correspond under the bijection of Theorem~\ref{th:ir2.5}.}
\label{fig:typeBbij}
\end{figure}

\subsection{Type $D$}\label{ssec:type D} 

The handling of type~$D$ partitions
is a further modification of our treatment of the
foregoing types, especially type~$B$. 

In classical partitions for $D_n$, the elements $\pm1$
will play much the same role as the element $0$ of classical nonnesting
partitions for~$B_n$.  So when applying the order $\lel$ and the terminology of
Definition~\ref{def:block signs} in type~$D$ we will regard $\pm1$
as being neither positive nor negative.

Given $\pi\in NN^{\rm cl}(D_n)$,
define the statistics $a(\pi)$, $\mu(\pi)$ and $\nu(\pi)$ as in type $B$.
In this case we must have 
\begin{equation*}
n-1 =  \sum_{i=1}^{m} \mu_{i} + \sum_{j=1}^{k} \nu_{j}.
\end{equation*}
Let $R_{1}\lel\cdots\lel R_{l}$ be the blocks of $\pi$ which 
contain both a positive element and either $1$ and~$-1$.  
It is clear that $l \leq 2$.  
Define the statistic $c(\pi) = (c_{1},\ldots,c_{l})$ by
$c_{i} = R_i\cap\{1,-1\}$.  To streamline the notation
we'll usualy write $c_i$ as one of the symbols $+$, $-$, $\pm$.
Observe that $\pi$ contains a zero block if and only if
$c(\pi) = (\pm)$.  

To get a handle on type $D$ classical noncrossing partitions,
we will transform them into type $B$ ones. 
Let $NC_{\rm r}^{\rm cl}(B_{n-1})$ be a relabelled set of classical
noncrossing partitions for~$B_{n-1}$, in which the parts $1,\ldots,(n-1)$
and $-1,\ldots,-{n-1}$ are changed respectively to $2,\ldots,n$
and $-2,\ldots,-n$.  
Define a map $CM:NC^{\rm cl}(D_n)\to NC_{\rm r}^{\rm cl}(B_{n-1})$, 
which we will call {\em central merging}, 
such that for $\pi\in NN^{\rm cl}(D_n)$,
$CM(\pi)$ is the classical noncrossing $B_{n-1}$-partition 
obtained by first merging the blocks containing $\pm 1$ (which we've
drawn at the center of the circular diagram)
into a single part, and then discarding these elements $\pm 1$.
Define the statistics $a$, $\mu$ and~$\nu$ for$~\pi$ to be equal
to those for $CM(\pi)$, where the entries of $a$ should acknowledge
the relabelling and thus be chosen from $\{2,\ldots,n\}$.  

These statistics do not uniquely characterise $\pi$, so we define additional statistics
$c(\pi)$ and~$\xi(\pi)$.
The definition of $c(\pi)$ is analogous to the nonnesting case:
let $R_{1}\lel\cdots\lel R_{l}$ be the blocks of~$\pi$ which 
intersect $\{1,-1\}$, and define $c(\pi) = (c_{1},\ldots,c_{l})$ where
$c_{i} = R_i\cap\{1,-1\}$.  
Also define $\zeta(\pi) = (\zeta_1,\ldots,\zeta_l)$ where
$\zeta_l = \#(R_l\cap\{2,\ldots,n\})$ is the number of positive
parts of $R_l$.  

Observe that $CM(\pi)$ lacks a zero block if and only if
$c(\pi)=()$, the case that $1$ and $-1$ both belong to
singleton blocks of~$\pi$.  In this case $CM(\pi)$
is just $\pi$ with the blocks $\{1\}$ and $\{-1\}$ removed, 
so that $\pi$ is uniquely recoverable given $CM(\pi)$.  
Otherwise, $CM(\pi)$ has a zero block.  If $c(\pi)=(\pm)$ this
zero block came from a zero block of $\pi$, and $\pi$ is restored
by resupplying $\pm 1$ to this zero block.  Otherwise two blocks of
$\pi$ are merged in the zero block of $CM(\pi)$.  
Suppose the zero block of $CM(\pi)$ is 
$\{c_1,\ldots,c_j,-c_1,\ldots,-c_j\}$, 
with $0<c_1<\cdots<c_j$, so that $j=\sum_{i=1}^l\zeta_l$.  
By the noncrossing and symmetry properties of $\pi$, 
one of the blocks of $\pi$ which was merged into 
this block has the form
$\{-c_{i+1},\ldots,-c_j,c_1,\ldots,c_i,s\}$
where $1\leq i\leq j$ and $s\in\{1,-1\}$.
Then, by definition, 
$c(\pi) = (s,-s)$ and $\xi(\pi) = (i,j-i)$,
except that if $j-i=0$ the latter component of each of these
must be dropped.  In this case the merged blocks of $\pi$ 
can be reconstructed since $c$ and~$\xi$ specify $s$ and~$i$.

Let a {\em tagged} noncrossing partition for~$B_{n-1}$ be an 
element $\pi\in NC_{\rm r}^{\rm cl}(B_{n-1})$
together with tuples
$c(\pi)$ of nonempty subsets of $\{1,-1\}$
and $\xi(\pi)$ of positive integers such that:
\begin{enumerate}
\item the entries of $c(\pi)$ are pairwise disjoint;
\item $c(\pi)$ and $\xi(\pi)$ have equal length;
\item the sum of all entries of~$\xi(\pi)$ is
the number of positive elements in the zero block of $\pi$.
\end{enumerate}

\begin{lemma}\label{lem:central merging}
Central merging gives a bijection between 
classical noncrossing partitions for~$D_n$
and tagged noncrossing partitions for~$B_{n-1}$.
\end{lemma}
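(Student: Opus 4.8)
\emph{Approach.} The plan is to prove the two halves of the bijection claim separately: first that the assignment $\pi\mapsto(CM(\pi),c(\pi),\xi(\pi))$ maps $NC^{\rm cl}(D_n)$ into the set of tagged noncrossing partitions for $B_{n-1}$, and then that the reconstruction recipe sketched in the paragraphs before the lemma is a two-sided inverse to it. The combinatorial bookkeeping --- which block of $\pi$ contributes which entry of $c(\pi)$ and $\xi(\pi)$, and how $\pi$ is rebuilt from $CM(\pi)$ together with these tuples --- has in effect already been carried out there; what is left is to confirm that each of the two maps respects the relevant noncrossing condition and that the tuples $c,\xi$ that $CM$ attaches to $CM(\pi)$ satisfy conditions (1)--(3).

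\emph{Well-definedness of $CM$.} I would argue in the circular bump diagram, with $\pm2,\ldots,\pm n$ on the circle and $\pm1$ drawn coincidently at the center. If $c(\pi)=()$, then $1$ and $-1$ are singletons and $CM$ merely deletes them. If $c(\pi)=(\pm)$, then $\pi$ has a zero block through the center and $CM$ strips $\pm1$ from it. In both cases the output is obviously still noncrossing. Otherwise the blocks $B_+\ni1$ and $B_-=-B_+\ni-1$ are distinct (so $\pi$ has no zero block) and their hulls meet only at the center; merging them and deleting $\pm1$ yields the centrally symmetric block $N$ whose noncentral points are the arc points of $B_+$ and of $B_-$. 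Since no other block of $\pi$ may cross $B_+$ or $B_-$, these arc points occupy two separated arcs of the circle and every other block lies inside one of the two complementary arcs; the hull of $N$ has a boundary chord cutting off each of those arcs, so $N$ crosses nothing and $CM(\pi)\in NC_{\rm r}^{\rm cl}(B_{n-1})$. Conditions (1)--(3) are then immediate: the blocks $R_i$ are distinct so the sets $R_i\cap\{1,-1\}$ are pairwise disjoint, $c(\pi)$ and $\xi(\pi)$ have equal length by construction, and the positive elements of the zero block of $CM(\pi)$ are precisely those of the blocks through the center of $\pi$, counted by $\sum_i\xi_i$.

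\emph{The inverse.} Given a tagged noncrossing partition $(\tau,c,\xi)$ for $B_{n-1}$, I would split on $c$. If $c=()$, then $\tau$ has no zero block and I adjoin the singletons $\{1\}$, $\{-1\}$; if $c=(\pm)$, I adjoin $1$ and $-1$ to the zero block of $\tau$, which, being centrally symmetric and nonempty, already contains the center in its hull, so nothing moves; in both cases the output is visibly noncrossing for $D_n$ and $CM$ returns $\tau$. Otherwise $c$ begins with a single sign $s=c_1\in\{+,-\}$; writing the zero block of $\tau$ as $\{\pm c_1,\ldots,\pm c_j\}$ with $0<c_1<\cdots<c_j$ and setting $i=\xi_1$ (so $1\le i\le j$, with $\{-c_{i+1},\ldots,-c_j\}$ empty when $i=j$, the length-one case $c=(s)$), I replace the zero block of $\tau$ by $B=\{c_1,\ldots,c_i\}\cup\{-c_{i+1},\ldots,-c_j\}\cup\{s\}$ and by $-B$. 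That this is inverse to $CM$ is exactly the content of the case analysis in the preceding discussion, since $c$ and $\xi$ were defined to record $s$ and $i$. The one genuine check is that the output is noncrossing for $D_n$: the hulls of $B$ and $-B$ are disjoint from every other hull because $B,-B\subseteq\mathrm{conv}(\{\pm c_1,\ldots,\pm c_j\})$, which is disjoint from the others since $\tau$ is noncrossing; and $\mathrm{conv}(B)$ and $\mathrm{conv}(-B)$ meet only at the center because the arc points of $B$ and of $-B$ form two complementary arcs of the polygon on $\{\pm c_1,\ldots,\pm c_j\}$, separated by a diameter through the two gaps between those arcs.

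\emph{Main obstacle.} I expect the crux to be these two geometric verifications, and in particular the behaviour of the coincident central dots $\pm1$: it is exactly here that the special clause in the definition of $NC^{\rm cl}(D_n)$ --- allowing two opposite blocks to touch at the center --- must be invoked, both to know that $B_+$ and $B_-$ are the only blocks of $\pi$ meeting the center and to certify that the split blocks $B$, $-B$ are permitted to touch there. Making these rigorous will likely require spelling out that the circle points sit in symmetric convex position and controlling which segments to the center can cross which chords.
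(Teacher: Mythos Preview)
Your proof is correct and follows essentially the same route as the paper's: both rely on the preceding discussion for bijectivity at the level of set partitions and tags, and then verify in the circular bump diagram that the noncrossing property is preserved in each direction. The only stylistic difference is that where you argue the forward direction by a direct case analysis on~$c(\pi)$ and the arc structure of the merged block, the paper compresses this into a one-line contradiction---if $CM(\pi)$ had a crossing it would involve the zero block~$O$ and some other block~$B$, so a chord $(i,j)$ of~$B$ crosses a diameter $(k,-k)$ with $k\in O$, and since the half-diameter from~$k$ to the centre lies inside a block~$R_l$ of~$\pi$, the same crossing already appears in~$\pi$.
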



\begin{proof}
The foregoing discussion establishes that $CM$ is bijective.
In view of this we need only check 
that the noncrossing property is preserved when
moving between $\pi$ and $CM(\pi)$.  
In terms of bump diagrams, if $CM(\pi)$ is noncrossing $\pi$ is easily seen to be.
For the converse, suppose $CM(\pi)$ has a crossing.  This must be between
the zero block $O$ and some other block $B$ of $CM(\pi)$, so that
it is possible to choose $i,j\in B$ and $k\in O$ such that the
segments $(i,j)$ and $(k,-k)$ within the bump diagram of $CM(\pi)$ cross.
But these segments also cross in the bump diagram for $\pi$ and are contained
within different blocks.
\end{proof}

We show next that partitions
are uniquely determined by the data we have associated with them.

\begin{lemma}
A classical nonnesting partition $\pi$ for $D_n$ is uniquely determined
by the values of $a(\pi)$, $\mu(\pi)$, $\nu(\pi)$, and $c(\pi)$.   
\end{lemma}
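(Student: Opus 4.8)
The plan is to reconstruct $\pi$ from its statistics by running the type~$C$/type~$B$ machinery on the positive side and then handling the central elements $\pm1$ and the zero block separately, showing at each stage that no choices remain. First I would observe that, away from the elements $\pm1$, the data $a(\pi)$, $\mu(\pi)$, $\nu(\pi)$ is exactly the type~$B$-style data attached to the partition that $\pi$ induces on $\{\pm2,\ldots,\pm n\}$ together with the incomplete switching edges; by the argument of Theorem~\ref{th:ir2.5} (which rests on Corollary~\ref{Cchar} and the fact that those conditions are order-independent in~$\nu$) these tuples determine uniquely the positive side of $\pi$ restricted to $\{2,\ldots,n\}$, i.e.\ the nonswitching positive blocks $M_1,\ldots,M_m$ and the partial switching chains $P_1',\ldots,P_k'$. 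Negating gives the negative side, so the only ambiguity left is how the incomplete switching blocks $P_i^*$ and $-P_j^*$ get paired and where $1$, $-1$ are inserted.

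Next I would use $c(\pi)$ to pin down that remaining ambiguity. Recall $c(\pi)=(c_1,\ldots,c_l)$ records, for the (at most two) blocks $R_i$ meeting a positive index and also meeting $\{1,-1\}$, which of $1,-1$ lies in $R_i$; and that $\pi$ has a zero block precisely when $c(\pi)=(\pm)$. I would split into the cases $l=0$, $l=1$, $l=2$ for $c(\pi)$. When $l=0$, no switching block touches $\{1,-1\}$, $1$ and $-1$ are singletons, and the switching blocks pair up in the unique nonnesting way forced by the type~$C$ procedure (as in the proof of Theorem~\ref{th:ir2.4}); there is nothing to choose. When $c(\pi)=(\pm)$, there is a zero block, it must (as in type~$B$, since a two-element zero block is impossible for $D_n$) absorb the middle switching chain $P_k^*$ together with both $\pm1$, and the other chains pair as $P_i^*$ with $-P_{k-i}^*$; again forced. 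When $l=1$ or $c(\pi)=(+,-)$ type cases, $c(\pi)$ tells us exactly which of $P_i^*, -P_i^*$ receives $1$ versus $-1$, and the nonnesting condition together with negation-symmetry forces the rest of the pairing, just as the sign $s$ was determined in the central-merging discussion. In every case the reconstruction is unique, so $\pi$ is determined.

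I would write this as: run the type~$B$ reconstruction on the positive restriction to get $M_i$ and the $P_i'$; negate; then case on $c(\pi)$ to fill in $\pm1$ and close up the switching blocks, citing that the nonnesting ground-set order on $D_n$ (with $1,-1$ incomparable, per Definition~\ref{def:classNN}) makes each closure step forced exactly as in Theorems~\ref{th:ir2.4} and~\ref{th:ir2.5}. The main obstacle I anticipate is the bookkeeping in the mixed cases $c(\pi)\in\{(+),(-),(+,-),(-,+)\}$: one must check that knowing merely \emph{which} of $\pm1$ sits in a positive-meeting block, without extra size information, still leaves no freedom in how that block is completed and how it pairs with its negative — here the key point is that $\nu$ already fixes the number of positive elements of each switching block, $\mu$ fixes the nonswitching ones, and the incomparability of $1$ and $-1$ prevents any nesting between the two central edges, so symmetry under negation does the rest. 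Once that is verified the lemma follows.
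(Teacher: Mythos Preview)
Your plan is sound and can be completed to a correct proof, but it takes a genuinely different route from the paper's. You reconstruct the positive side of $\pi$ on $\{2,\ldots,n\}$ directly from $(a,\mu,\nu)$ via the chain-placement procedure, then case on $c(\pi)$ to insert $\pm1$ and close up the switching blocks. The paper instead reduces each value of $c(\pi)$ wholesale to a previously handled type: dropping $\pm1$ yields a $C_{n-1}$ nonnesting partition when $c=()$; merging $\pm1$ into a single $0$ yields a $B_{n-1}$ nonnesting partition when $c=(\pm)$; and --- this is the move you do not make --- when $c$ ends in $+$ (the cases $(+)$ and $(-,+)$, to which $(-)$ and $(+,-)$ reduce by swapping $\pm1$), the paper observes that $\pi$ is \emph{already} a valid $C_n$ nonnesting partition once one refines the ground-set order to $-1<1$, since ``$c$ ends in $+$'' guarantees no new nesting appears at the edge terminating in~$1$. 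It then records explicit injective translations from the $D_n$ data $(a,\mu,\nu,c)$ to the $C_n$ statistics $(a',\mu',\nu')$ and inherits uniqueness from Theorem~\ref{th:ir2.4}. This sidesteps precisely the closure bookkeeping you correctly flag as the main obstacle; your route is more uniform across the cases but would require carrying out that pairing analysis in full, including the point (left implicit in your sketch) that the blocks meeting $\pm1$ are necessarily the last $l$ switching blocks in the $\lel$ order. Also note that your appeal to ``the central-merging discussion'' for the sign $s$ is misplaced: that discussion treats the noncrossing side, not the nonnesting reconstruction you need here.
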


\begin{proof} 
We reduce to the analogous facts for classical nonnesting partitions
of types $B$ and~$C$.  There are slight variations in
the behaviour depending on $c(\pi)$, so we break the argument 
into cases.  

If $c(\pi)=()$, then dropping the elements $\pm1$ from $\pi$ 
and relabelling $2,\ldots,n,$ $-2,\ldots,-n$ to $1,\ldots,n-1,-1,\ldots,-(n-1)$
yields a nonnesting partition $\pi'$ for~$C_{n-1}$, and this 
is uniquely characterised by $a(\pi')$, $\mu(\pi')$, and $\nu(\pi')$,
which only differ from the statistics of~$\pi$ by the relabelling in $a$.  

If $c(\pi)=(\pm)$, then merging the elements $\pm1$ into a single
element $0$ and relabelling $2,\ldots,n,-2,\ldots,-n$ to $1,\ldots,n-1,-1,\ldots,-(n-1)$
yields a nonnesting $B_{n-1}$-partition, and this
is again uniquely characterised by $a(\pi')$, $\mu(\pi')$, and $\nu(\pi')$,
which only differ from the statistics of~$\pi$ by the relabelling in $a$. 

The cases $c(\pi)=(-)$ and $c(\pi)=(+,-)$ are carried
under the exchange of $+1$ and $-1$ respectively to
$c(\pi)=(+)$ and $c(\pi)=(-,+)$, 
so it suffices to handle only the latter two.

We claim that, in these latter two cases, 
$\pi$ is itself a classical nonnesting partition for $C_n$.  
We will write $\pi'$ for $\pi$ when we mean to conceive of it
as an element $NN^{\rm cl}(C_n)$; in particular $\pi$ and $\pi'$ 
will have different statistics.
Since the ground set order for $NN^{\rm cl}(C_n)$ is a refinement of the
order for $NN^{\rm cl}(D_n)$ in which only the formerly incomparable elements
$1$ and $-1$ in $\pi$ have become comparable in $\pi'$,
$\pi'$ will be in $NN^{\rm cl}(C_n)$ so long as no nestings
involving edges of $G(\pi)$ terminating at $1$ and $-1$ are introduced.  
By symmetry, if there is such a nesting, there will be one involving
the edges $(i,1)$ and $(j,-1)$ of $G(\pi')$ 
for some $i,j>1$.  But the fact that $c(\pi)$ 
ends with $+$ implies either $i>j$
or the edge $(j,-1)$ does not exist, so there 
is no nesting of this form.  

When we readmit $1$ and $-1$ as positive and negative elements, respectively,
every nonswitching block of $\pi$ remains nonswitching in $\pi'$,
and every switching block of $\pi$ remains switching unless
its only nonpositive element was $1$; in this latter case 
$-1$ is likewise the only nonnegative element of its block, which
happens iff $c(\pi) = (+)$.

Let $a(\pi) = (a_1,\ldots,a_m)$, $\mu(\pi) = (\mu_1,\ldots,\mu_m)$, 
$\nu(\pi) = (\nu_1,\ldots,\nu_m)$.
In the case $c(\pi) = (-,+)$, we have 
\begin{align}\label{eq:DtoC1}
a(\pi') &= a(\pi) \\ 
\mu(\pi') &= \mu(\pi) \notag\\ 
\nu(\pi') &= (\nu_k+1,\nu_1,\ldots,\nu_{k-1}). \notag
\end{align}
That is, the block containing
$1$ is the greatest switching block of~$\pi$ under~$\lel$ by assumption, but
in $\pi'$ where $1$ is positive it
becomes the first switching block.  The other switching blocks are
unchanged in number of positive elements and order, and nothing
changes about the switching blocks.  
In case $c(\pi) = (+)$, 
the block containing $1$ contains no other nonpositive
element, so it becomes a nonswitching block, and in this case we get
\begin{align}\label{eq:DtoC2}
a(\pi') &= (1,a_1,\ldots,a_m) \\
\mu(\pi') &= (\nu_k+1,\mu_1,\ldots,\mu_m) \notag\\
\nu(\pi') &= (\nu_1,\ldots,\nu_{k-1}). \notag
\end{align}
In either case $\pi'$ is a classical nonnesting partition for $C_n$,
and as such is determined by its statistics, but the translations
\eqref{eq:DtoC1} and~\eqref{eq:DtoC2} are injective so that $\pi$
is determined by its statistics as well.
\end{proof}

Note that, when $c(\pi)$ is $(+)$ or $(-,+)$, $\pi'$
is an arbitrary noncrossing partition for~$C_n$
subject to the condition that $1$ is not the only positive element of its block.
The cases $(+)$ and $(-,+)$ can be distinguished 
by whether $a(\pi')$ starts with~1.
Note also that the blocks of $\pi$ which contain one of the parts $\pm1$ are 
exactly those described by the last $l$ components of $\nu(\pi)$, 
where $l$ is the length of~$c(\pi)$.  

All that remains to obtain a bijection
is to describe the modifications to $\nu$ that are needed for correct handling
of the zero block and its components (rather as in type~$B$).  
For a classical nonnesting partition $\pi$ for $D_n$, 
find the tuples $a(\pi)$, $\mu(\pi)$, $\nu(\pi) = (\nu_1,\ldots,\nu_k)$,
and $c(\pi)$.  
Let $\xi(\pi)$ be the tuple of the last $l$ entries of $\nu(\pi)$,
where $l$ is the length of~$c(\pi)$.  Define
\begin{multline*}
\left(\hat\nu(\pi),\xi_{{\rm inv}}(\pi),c_{{\rm inv}}(\pi)\right) 
\\= \left\{\begin{array}{l@{\quad}l}
 \left((\nu_{1},\ldots,\nu_{k/2-1},\nu_{k-1}+\nu_k,\nu_{k/2}\ldots,\nu_{k-2}),
 (\xi_2,\xi_1),(c_2,c_1)\right)
   & \mbox{if $l=2$}  \\ 
 \left((\nu_{1},\ldots,\nu_{(k-1)/2},\nu_{k},\nu_{(k+1)/2}\ldots,\nu_{k-1}),
 \xi(\pi),c(\pi)\right)
   & \mbox{if $l=1$ } \\
 \left(\nu(\pi),\xi(\pi),c(\pi)\right)
   & \mbox{if $l=0$ } \\
\end{array}\right.
\end{multline*}
Define a bijection $\sigma_D$ by $\sigma_D\left(\nu(\pi),c(\pi)\right)= \left(\hat\nu(\pi),\xi_{{\rm inv}}(\pi),c_{{\rm inv}}(\pi)\right)$.
This gives us all the data
for a tagged noncrossing partition $CM(\pi')$ for~$B_{n-1}$, which corresponds
via central merging with a noncrossing partition $\pi'$ for~$D_n$.  
Going backwards, from a noncrossing partition $\pi'$ we recover 
a nonnesting partition $\pi$ by applying central merging, 
finding the list of statistics 
$\left(a(\pi),\mu(\pi),\left(\nu(\pi),c(\pi)\right)\right)$ via the equality
$$\left(\nu(\pi),c_{\pi}\right)=\sigma_D^{-1}\left(\nu(\pi'),\xi(\pi'),
c_(\pi')\right)$$
(the other statistics remain equal) and using these statistics to make a nonnesting
partition as usual. Type preservation
is implied within these modifications of the statistics.
When a zero block exists, the number of positive
parts it contains is preserved because of the equality
$\xi(\pi')=\nu(\pi)_k$ which holds in that case. 
Our handling of $\nu$ 
leaves the components corresponding 
to switching blocks not containing $1$ or $-1$ unchanged, so the
number of positive parts in these blocks is also preserved. The
number of positive parts in the blocks containing $1$ or $-1$ is preserved
because $\xi(\pi')$ corresponds to $c(\pi')$ as the $l$ last entries of
$\nu(\pi)$ correspond to $c(\pi)$  
in all cases. The size of each nonswitching block is preserved in the
statistic $\mu$, as in previous cases.  
 
All in all, we have just proved the following theorem.

\begin{theorem}\label{th:2.6}
The lists of statistics $\left(a,\mu,(\nu,\xi,c)\right)$ and 
$\left(a,\mu,(\nu,c)\right)$ 
establish a type-preserving bijection for $D_n$
via the bijections $(\id,\id,(\sigma_D)^{-1})$.
\end{theorem}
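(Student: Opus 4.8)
The plan is to assemble the theorem from the two lemmas of this subsection and the explicit description of $\sigma_D$, in the manner the discussion above has set up. First I would record the two bijective encodings already in hand. On the noncrossing side, Lemma~\ref{lem:central merging} identifies $NC^{\rm cl}(D_n)$ with the tagged noncrossing partitions for $B_{n-1}$; composing this with the characterisation of classical noncrossing partitions for $B_{n-1}$ by $(a,\mu,\nu)$ (Corollary~\ref{Cchar}, which applies to type $B_{n-1}$ as noted) and with the tagging conditions (1)--(3), one sees that a noncrossing partition $\pi'$ for $D_n$ is faithfully recorded by the list $\bigl(a(\pi'),\mu(\pi'),(\nu(\pi'),\xi(\pi'),c(\pi'))\bigr)$, and conversely every list meeting the relevant numerical constraints so arises. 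On the nonnesting side, the uniqueness lemma shows that a classical nonnesting partition $\pi$ for $D_n$ is determined by $\bigl(a(\pi),\mu(\pi),\nu(\pi),c(\pi)\bigr)$, and its proof --- through the reductions \eqref{eq:DtoC1} and \eqref{eq:DtoC2} to types $B$ and $C$ --- simultaneously delineates which such lists occur.

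Next I would check that $\sigma_D$ is a bijection between the valid nonnesting data $(\nu,c)$ and the valid noncrossing data $(\nu,\xi,c)$. By construction $\sigma_D$ merely permutes the entries of $\nu$, peels off its last $l$ entries to form $\xi$, and reverses $c$ in the case $l=2$; the case split on $l\in\{0,1,2\}$ exhibits a term-by-term inverse $\sigma_D^{-1}$. The only point requiring care is that validity is preserved in both directions: the numerical inequalities of Corollary~\ref{Cchar} do not see the order of the entries of $\nu$, so they survive any reordering, and the tagging conditions (1)--(3) constrain only the length of $c$, the disjointness of its entries, and the length and total of $\xi$, all of which the formula for $\sigma_D$ was written to respect. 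Composing the nonnesting encoding, the bijection $\sigma_D$ on the level of data, and $CM^{-1}$ then produces the asserted bijection $NN^{\rm cl}(D_n)\to NC^{\rm cl}(D_n)$, realised on statistics by $(\id,\id,(\sigma_D)^{-1})$ as in the convention preceding the statement.

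It then remains to see the bijection is type-preserving, which by Proposition~\ref{prop:what types are} means the multiset of block sizes and the size of any zero block agree for $\pi$ and $\pi'$. The sizes of the nonswitching blocks are carried unchanged by $\mu$. The positive-part counts of switching blocks not meeting $\{1,-1\}$ are the leading entries of $\nu$, fixed by $\sigma_D$. The blocks meeting $\{1,-1\}$ are exactly those recorded by the last $l$ entries of $\nu$ together with $c$ on the nonnesting side, and by $\xi$ together with $c$ after $CM$ on the noncrossing side; since the switching blocks are paired up symmetrically in the same combinatorial way when building either $\pi$ or $\pi'$ (including the fusion that central merging performs for a zero block), $\sigma_D$ shuffles these data about without changing the sizes of the resulting blocks. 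In particular a zero block is present on one side exactly when it is on the other --- governed by $c=(\pm)$ --- and its number of positive parts equals $\nu(\pi)_k=\xi(\pi')$, so the zero-block size matches too.

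I expect the only genuine labour to lie in the third step, tracking how the blocks meeting $\{1,-1\}$ --- especially the pair of blocks of a $D_n$ partition that central merging fuses into the zero block of $CM(\pi)$, handled by the $l=2$ branch of $\sigma_D$ with its degenerate $j-i=0$ subcase --- are reordered and recombined while keeping sizes fixed. But this tracking has already been carried out in Lemma~\ref{lem:central merging}, the uniqueness lemma, and the analysis defining $\sigma_D$, so the proof itself is essentially the act of stringing those results together.
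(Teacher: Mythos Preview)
Your proposal is correct and follows essentially the same route as the paper: the paper's proof is precisely the discussion preceding the theorem statement (ending ``All in all, we have just proved the following theorem''), which assembles Lemma~\ref{lem:central merging}, the uniqueness lemma, the explicit definition of $\sigma_D$, and the block-size bookkeeping for type preservation in just the way you outline. One minor slip worth tightening: $\sigma_D$ does not ``merely permute'' the entries of $\nu$ in the $l=2$ case --- there $\hat\nu$ contains the summed entry $\nu_{k-1}+\nu_k$ and has length $k-1$, reflecting the fusion under central merging --- but you correctly acknowledge this fusion in your final paragraph, so the argument stands.
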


Figures \ref{fig:typeDNN} and~\ref{fig:typeDbij} 
illustrate this bijection. 

\begin{figure}[ht]
\centering
\includegraphics{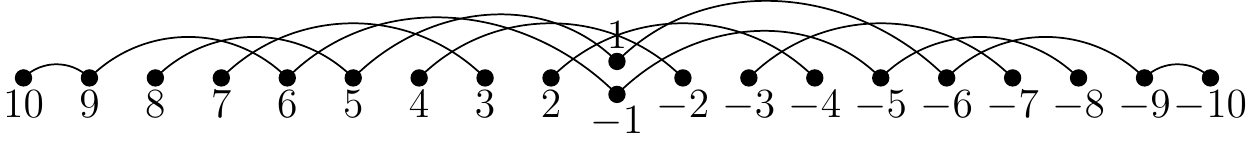}
\caption{The $D_{10}$ nonnesting partition corresponding to 
$a=(3)$, $\mu=(2)$, $\nu=(1,1,2,3)$, $c=(+,-)$ (so $\hat\nu=(1,5,1)$).}
\label{fig:typeDNN}
\end{figure}

\begin{figure}[ht]
\centering
\includegraphics{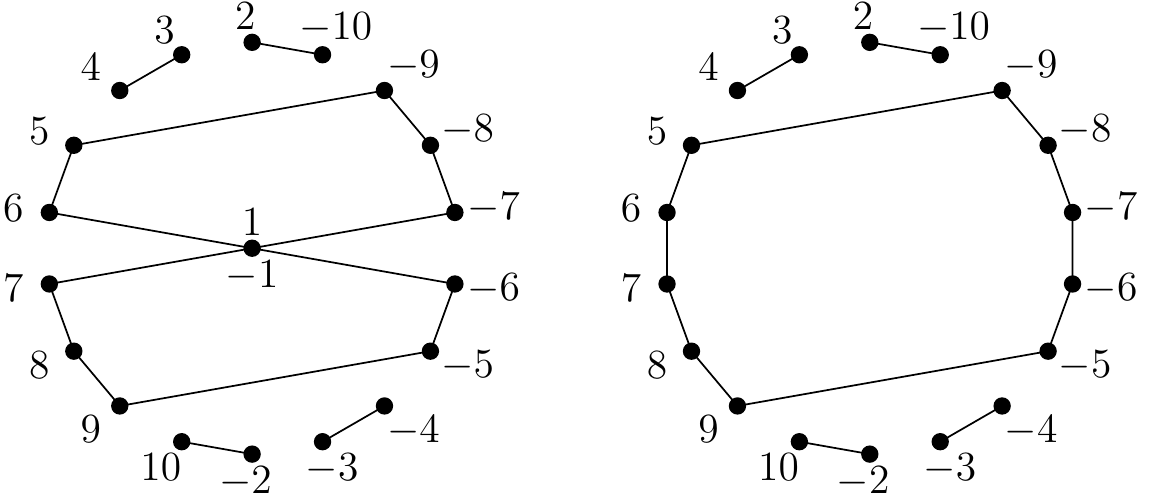}
\caption{ (left) The $D_{10}$ noncrossing partition corresponding to 
$a=(3)$, $\mu=(2)$, $\nu=(1,5,1)$, $\xi = (3,2)$, $c=({-},{+})$. 
(right) The relabelled type $B$ noncrossing 
partition obtained via central merging.  
}
\label{fig:typeDbij}
\end{figure}

Finally we present a characterization of the values of
$a$, $\mu$, $\nu$ and~$c$ that describe type $D$ classical nonnesting partitions.  
As for noncrossing partitions, between our discussion of type $B$
and the definition of tagged partitions and Lemma~\ref{lem:central merging}, 
we have already presented all parts of the analogous result.  

\begin{corollary}\label{Dchar}
Suppose we are given the tuples of positive integers
 $a = (a_1,\ldots,a_{m_1})$, $\mu = (\mu_1,\ldots,\mu_{m_2})$ and 
$\nu=(\nu_1,\ldots,\nu_k)$, and a tuple $c=(c_1,\ldots,c_l)$ 
of nonempty subsets of $\{1,-1\}$.  
Let $n > 0$. Define $a_0 = 1$ and $\mu_0 = 2$. 
Then, $a$, $\mu$, $\nu$ and~$c$ represent a classical nonnesting 
partition for $D_n$ if and only if
\begin{enumerate}
\item $m_1 = m_2 = m$;
\item $n-1 =  \sum_{i=1}^{m} \mu_{i} + 
\sum_{j=1}^{k} \nu_{j}$;
\item  $a_{i-1} < a_i \leq \sum_{k=0}^{i-1} \mu_{k}
 + \sum_{j=1}^{k} \nu_{j}$ for $i=1,2,...,m$;
\item the entries of $c(\pi)$ are pairwise disjoint, so in particular $l\leq 2$;
\item $k-l$ is even.
\end{enumerate}
\end{corollary}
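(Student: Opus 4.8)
The plan is to derive this characterization from the type $B$ and $C$ statement, Corollary~\ref{Cchar}, by following the same case split on $c(\pi)$ that was used to prove a classical nonnesting partition for $D_n$ is determined by $a,\mu,\nu,c$. Conditions (1)--(3) will be Corollary~\ref{Cchar} transported to $D_n$; condition (4) just records the definition of $c$; and (5) records that the switching blocks missing $\{1,-1\}$ occur in negation pairs.

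For necessity, take $\pi\in NN^{\rm cl}(D_n)$ with the given tuples and split on $c(\pi)\in\{(),(\pm),(+),(-),(+,-),(-,+)\}$ as before. If $c(\pi)=()$, remove $\pm1$ and relabel $\{2,\ldots,n\}\to\{1,\ldots,n-1\}$ to obtain $\pi'\in NN^{\rm cl}(C_{n-1})$ with the same tuples up to the shift in $a$; Corollary~\ref{Cchar} applied to $\pi'$ yields (1), (2), and the inequalities (3), where pushing the shift back turns the base values $(a_0,\mu_0)=(0,1)$ into $(1,2)$. If $c(\pi)=(\pm)$, the identical argument after merging $\pm1$ into $0$ lands in $NN^{\rm cl}(B_{n-1})$, which Corollary~\ref{Cchar} characterizes equally well. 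In the four remaining cases the determination lemma writes $\pi$ as an element $\pi'\in NN^{\rm cl}(C_n)$ with tuples given by the explicit invertible substitutions \eqref{eq:DtoC1} and~\eqref{eq:DtoC2}; feeding $\pi'$ to Corollary~\ref{Cchar} and inverting those substitutions reproduces (1)--(3) with the same base values (here the clause $a_0=1<a_1$ simply restates that the positive labels of $D_n$ start at $2$). Condition (4) is automatic: the $R_j$ are distinct blocks, each meeting $\{1,-1\}$, so their traces on the two-element set $\{1,-1\}$ are pairwise disjoint, forcing $l\le2$. For (5), a zero block of $\pi$ meets $\{1,-1\}$ (indeed $\pi$ has a zero block iff $c(\pi)=(\pm)$), so none of the $k-l$ switching blocks outside $R_1,\ldots,R_l$ is fixed by negation; these therefore come in opposite-sign pairs and $k-l$ is even.

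For sufficiency one reverses this. Given $a,\mu,\nu,c$ satisfying (1)--(5), conditions (4)--(5) provide at most two tagged slots and an even number $k-l$ of ordinary switching slots, while (1)--(3) together with Corollary~\ref{Cchar} (in dimension $n-1$ or $n$, with the base shift above) produce the positive nonswitching chains and the positive parts of the switching chains. Laying these out on $\{2,\ldots,n\}$ exactly as in the constructions proving Theorems~\ref{th:ir2.4} and~\ref{th:ir2.5} --- reordering $\nu$ by $\sigma_D$ first so that the zero block and the $R_j$ fall in their correct positions --- then pairing the ordinary switching chains under negation and attaching $1,-1$ to the tagged chains (or forming a zero block) according to $c$, yields a classical nonnesting partition for $D_n$ with exactly the prescribed tuples; uniqueness is the determination lemma.

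The step I expect to be the main obstacle is the bookkeeping behind (5): one must confirm, uniformly over all values of $c(\pi)$, that the zero block when present is always one of the tagged blocks $R_j$, so that precisely the remaining $k-l$ switching blocks are the ones forced into negation pairs, and conversely that any prescribed even collection of such pairs coexists with the chosen tags. The companion verification that $(a_0,\mu_0)=(1,2)$ is the correct replacement for $(0,1)$ in Corollary~\ref{Cchar} is routine once the statistic-level reductions are written out, but it should be checked directly against inequality (3) of that corollary on the reduced partition.
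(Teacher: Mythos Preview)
Your approach is essentially the paper's: the corollary is stated there without a standalone proof, the paper remarking that ``we have already presented all parts'' in the preceding discussion, meaning exactly the case split on $c(\pi)$ from the determination lemma together with the reductions to Corollary~\ref{Cchar} in types $B_{n-1}$ and $C_{n-1}$ (or $C_n$ via \eqref{eq:DtoC1}--\eqref{eq:DtoC2}). Your handling of conditions (4) and (5) and of the base shift $(a_0,\mu_0)=(1,2)$ is correct.

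One small slip: in your sufficiency paragraph you invoke $\sigma_D$ when constructing the \emph{nonnesting} partition. The map $\sigma_D$ is the reordering that passes between nonnesting and noncrossing statistics; it plays no role in rebuilding a nonnesting partition from $(a,\mu,\nu,c)$. On the nonnesting side the last $l$ entries of $\nu$ already correspond to the tagged blocks $R_1,\ldots,R_l$ (with the zero block, if present, being $P_k$), so you lay out the chains directly as in the type~$C$ construction and then attach $\pm 1$ according to $c$. Dropping the $\sigma_D$ reference, your argument goes through.
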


\subsection{Proof of the central theorem}\label{ssec:proof}

Using the preceding bijections we are now ready to establish
our central result.

\newcommand{\AAC}{x^{\rm cl}}
\newcommand{\PIC}{y^{\rm cl}}
\newcommand{\stat}{{\mathcal S}^\ast}
\newcommand{\x}{x^{\rm cl}}
\newcommand{\y}{y^{\rm cl}}

 \begin{proof}[Proof of Theorem~\ref{th:theTheo}]
When defining statistics and 
using the terminology of 
Definition~\ref{def:block signs} we consider positive 
integers as positive elements of
blocks and negative integers as negative ones, without exception.  
Tag these new statistics with $^*$ to distinguish
them from the old statistics defined in
Sections~\ref{ssec:type A} through \ref{ssec:type D}. 
Let $\x$ be the
classical partition representing $x$. Let $\eta^*(\x)$ be the number 
of positive elements
in the zero block of $\x$. 
For any nonzero switching block $P$ of $\x$, define
the {\em joint block} $$S=\displaystyle\min_{\lel}
\left(P,-P\right)$$ and let 
$S_{1}\lel\cdots\lel S_{k'}$
be the joint blocks of $\x$. The number of joint blocks
$k'$ is half the number of nonzero switching blocks.
Let $\vartheta_{+i}^*$ be the number of positive elements in $S_i$ and
let $\vartheta_{-i}^*$ be the number of negative elements in $S_i$ and
define the statistic $\vartheta^*(\x)=
\left((\vartheta_{+1}^*,\vartheta_{-1}^*),\dots,(\vartheta_{+k'}^*,
\vartheta_{-k'}^*)\right)$. Finally, define as usual the statistics 
$a^*(\x)=(a_{1}^*,\dots,a_{m'}^*)$
and $\mu^*(\x)=(\mu_{1}^*,\dots,\mu_{m'}^*)$.

Let $\y$ be the image
of $\x$ under the bijections of Theorems~\ref{th:ir2.3}--\ref{th:2.6}.
There is a simple way to 
find a basis for $\Fix(x)$. From $\x$ define a function $f:\x\rightarrow\pmtupleset$
in the following way.
For any block $B$ of $\x$, let 
$$f(B)=\sgn(B) 
\sum_{b\in B}\frac{b}{|b|}e_{|b|}$$ where $\sgn(B)$ 
is $+1$ or $-1$ so that $f(B)\mathrel{\geq_{\rm lex}}0$ when 
$B$ is nonswitching and $-f(B)\mathrel{\geq_{\rm lex}}0$ when $B$ is 
switching. The set 
$\beta := f\left(\x\right)\setminus\{0\}$ is the
basis we are looking for, which we call the {\em canonical basis} of
$\Fix(x)$. 
 
For a (positive) nonswitching block
$C_i$ of $\x$, we have 
\begin{align}\label{eq:tr1}
|\underline{\myfans}\cap E| &=n+1-a_{i}^* \nonumber \\
\numones \myfans &=\mu_{i}^*  \\ 
\numnegones \myfans &=0 \nonumber \\ 
|\underline{\myfans}\cap \beta|
&=(m'+1-i)+(k') \nonumber 
\end{align}
For a joint block $S_j$ of $\x$, we have
\begin{align}\label{eq:tr2}
|\underline{\myfas}\cap E|&=0 \nonumber \\
\numones \myfas&=\vartheta_{-j}^*  \\ 
\numnegones \myfas&=\vartheta_{+j}^* \nonumber \\ 
|\underline{\myfas}\cap \beta|
&=j \nonumber 
\end{align}
In any case, we have the equality
\begin{equation}\label{eq:tr3}
\Gamma_x=\eta^*(\x)
\end{equation} 
Note that 
$$f(S_1) \llex\dots\llex f(S_{k'})\llex 0 
\llex f(C_{m'})\llex\dots\llex f(C_1)$$ 
and that $m'+k'$ is the number of vectors in 
the ordered basis $\beta$.
In fact 
\begin{equation}
\beta=\left\{f(S_1),\dots, 
f(S_{k'}), 
f(C_{m'}),\dots,f(C_1)\right\} \nonumber
\end{equation}  

Suppose $z$ is nonnesting or noncrossing partition of $W$ and suppose 
$\{v_1,\ldots,v_p\}$ is the canonical basis of $\Fix(z)$, 
ordered so that $v_1\llex\cdots\llex v_n$, which we don't know. 
Let $z^{\rm cl}$ be the classical partition of $z$. 
Then, knowing the statistics 
$\stat:=(a^*,\mu^*,\vartheta^*,\eta^*)$ associated to
$z^{\rm cl}$
allows us to recover the data in
\eqref{eq:tr1} through~\eqref{eq:tr3} associated to each of the
$v_i$, and vice versa.  
    Thus, the first step to reach our goal would be to prove that 
the bijections in Theorems~\ref{th:ir2.3} through \ref{th:2.6} actually
preserve the statistics $\stat$. 
Any of the old statistics for $\y$ that is not mentioned
in the following lines is trivially recovered from $\stat$.

Assume without loss of generality that $x$ is a nonnesting partition, the
other direction being completely analogous. 

We begin with the case where $x$ is a nonnesting partition of $A_{n-1}$. The bijection
of Theorem~\ref{th:ir2.3} clearly preserves $\stat$.
We have $a(\PIC)=a^*(\AAC)$ and $\mu(\PIC)=\mu^*(\AAC)$ so the uniqueness
of $\PIC$ is established directly from the statistics
$\stat$.

Suppose $x$ is an antichain for $C_n$. 
The statistics $a^*$, $\mu^*$ and $\eta^*$ are clearly preserved
in Theorem~\ref{th:ir2.4}. Also $\vartheta_{+i}^*=\nu_i$ and
$\vartheta_{-i}^*=\nu_{k+1-i}$ so $\vartheta^*$ is also
preserved. When there is a zero block we have 
$\nu_{(k+1)/2}=\eta^*$ and this happens if and only if
$\eta^*>0$. Therefore $\PIC$ is characterized 
by $\stat$.
 
Consider the case when $x$ is an antichain for $B_n$.
Again, the statistics $a^*$, $\mu^*$ and $\eta^*$ are clearly preserved
in Theorem~\ref{th:ir2.5}. 
If there is no zero block we have $\vartheta_{+i}^*=\nu_i$ and
$\vartheta_{-i}^*=\nu_{k+1-i}$. 
When there is a zero block we have 
\begin{align}
\vartheta_{+i}^*(\AAC)&=\nu_i(\AAC)&
\vartheta_{+i}^*(\PIC)&=\nu_i(\PIC) \nonumber \\
\vartheta_{-i}^*(\AAC)&=\nu_{k-i}(\AAC)&
\vartheta_{-i}^*(\PIC)&=\nu_{k+1-i}(\PIC) \nonumber
\end{align}
but we also know that
\begin{equation}
\nu_i(\AAC)=\nu_i(\PIC)\qquad\mbox{and}\qquad
\nu_{k-i}(\AAC)=\nu_{k+1-i}(\PIC) \nonumber
\end{equation}
so $\vartheta^*$ is preserved. There is a zero block
if and only if $\eta^*>0$ and here we know in addition
that $\nu_k(\AAC)=\eta^*$ and $\nu_{(k+1)/2}(\PIC)=\eta^*$.
Therefore $\PIC$ is again characterized 
by $\stat$.

\newcommand{\INS}[1]{{\rm Ins}_{#1}}
\newcommand{\DELT}[1]{{\rm Del}_{#1}}

We now consider the case when $x$ is
an antichain for $D_n$.        
This part is divided into several subcases. Consider first when
$c(\AAC)=()$.  
Here, $\AAC$ 
is a classical nonnesting partition for 
$B_n$ and its image $\PIC$ under 
Theorem~\ref{th:2.6} is the unique classical noncrossing partition
from Theorem~\ref{th:ir2.5} 
so the previous type suffices. We know $c(\AAC)=()$ holds exactly
when $a_{1}^*=1$, $\mu_{1}^*=1$ and $\eta^*=0$.

Suppose we
have $c(\AAC)=(+)$. Here the element $+1$ belongs
to a nonswitching block of size $> 1$. In
the bijection of Theorem~\ref{th:2.6} the statistics
$\stat$ are preserved and this case is characterized 
by $a_{1}^*=1$, $\mu_{1}^*>1$ and $\eta^*=0$.
Furthermore, on the noncrossing side we have 
\begin{align*}
a(\PIC)&=\left(\widehat{a}_{1}^*,a_{2}^*,\dots,a_{m'}^*\right)  \\
\mu(\PIC)&=\left(\widehat{\mu}_{1}^*,\mu_{2}^*,\dots,\mu_{m'}^*\right) 
 \\
\nu(\PIC)&=\left(\vartheta_{+1}^*,\dots,\vartheta_{+k'}^*,
\mu_{1}^*-1,\vartheta_{-k'}^*,\dots,\vartheta_{-1}^*\right)  \\
\xi(\PIC)&=(\mu^{*}_1-1)  \\  
c(\PIC)&=(+) 
\end{align*}
so the uniqueness of $\PIC$ is established directly from
$\stat$.   

Suppose $c(\AAC)=(-)$.
The statistics $a^*$, $\mu^*$ and $\eta^*$ are preserved. 
To check that $\vartheta^*$ is preserved we have
\begin{align*}
\vartheta_{+1}^*(\AAC)&=1 & \vartheta_{+1}^*(\PIC)&=1  \\
\vartheta_{-1}^*(\AAC)&=\nu_k(\AAC) & \vartheta_{-1}^*(\PIC)&=
\nu_{\frac{k+1}{2}}(\PIC)  \\
\vartheta_{+i}^*(\AAC)&=\nu_{i-1}(\AAC) & \vartheta_{+i}^*(\PIC)&=
\nu_{i-1}(\PIC)\quad\mbox{ for }i>1  \\
\vartheta_{-i}^*(\AAC)&=\nu_{k-i}(\AAC) & \vartheta_{-i}^*(\PIC)&=
\nu_{k+1-i}(\PIC)\quad\mbox{ for }i>1   
\end{align*} 
However, we know the following equalities hold.
\begin{gather*}
\nu_k(\AAC)=\nu_{\frac{k+1}{2}}(\PIC) \\
\nu_{i-1}(\AAC)=\nu_{i-1}(\PIC) \qquad\mbox{and}\qquad 
\nu_{k-i}(\AAC)=\nu_{k+1-i}(\PIC)
\quad\mbox{ for }i>1
\end{gather*}
Hence, $\vartheta^*$ is indeed preserved.
We also know that $c(\AAC)=(-)$ if and only if
$a_{1}^*>1$, $\vartheta_{+1}^*=1$ and $\eta^*=0$. Using the previous
equations we may see that $\nu(\PIC)$ is obtained
uniquely from $\vartheta^*$, therefore $\PIC$ is
characterized by $\stat$.

Suppose $c(\AAC)=(\pm)$. Here it 
is easily seen that $\stat$ are 
preserved. The characterization for the case is 
$\eta^*>0$ and the uniqueness of $\PIC$ is also
easily established. 

Finally, consider the case when $l=2$ so either
$c(\AAC)=(+,-)$ or $c(\AAC)=(-,+)$ holds. 
To start, suppose that $c(\AAC)=(+,-)$. The bijection of 
Theorem~\ref{th:2.6} preserves $a^*$, $\mu^*$ and $\nu^*$ clearly.
To see that $\vartheta^*$ is also preserved we need the
more intricate equalities
\begin{align*}
\vartheta_{+1}^*(\AAC)&=\nu_{k-1}(\AAC)+1&
\vartheta_{+1}^*(\PIC)&=\xi_{2}(\PIC)+1 \\
\vartheta_{-1}^*(\AAC)&=\nu_{k}(\AAC)&
\vartheta_{-1}^*(\PIC)&=\xi_{1}(\PIC) \\
\vartheta_{+i}^*(\AAC)&=\nu_i(\AAC)&
\vartheta_{+i}^*(\PIC)&=\nu_i(\PIC)\quad\mbox{ for $i>1$} \\
\vartheta_{-1}^*(\AAC)&=\nu_{k-1-i}(\AAC)&
\vartheta_{-1}^*(\PIC)&=\nu_{k+1-i}(\PIC)\quad\mbox{ for $i>1$} 
\end{align*}
But we know from the handling of the statistics for type $D$ that
$$\nu_{k-1}(\AAC)=\xi_{2}(\PIC),$$
because of the function $\sigma_D$;
$$\nu_{k}(\AAC)=\xi_{1}(\PIC),$$
also because of the function $\sigma_D$; and
$$\nu_i(\AAC)=\nu_i(\PIC)\quad\mbox{ and }\quad\nu_{k-1-i}(\AAC)=\nu_{k+1-i}(\PIC)
\quad\mbox{ for $i>1$}.$$
This implies that $\vartheta^*$ is preserved in Theorem~\ref{th:2.6}. 
Note that $c(\AAC)=(+,-)$ or $c(\AAC)=(-,+)$ 
occurs whenever none of the previous cases
holds or whenever $a_{1}^*>1$, $\vartheta_{1}^*>1$ and $\eta^*=0$. Note also
that we can obtain $a(\PIC)$, $\mu(\PIC)$, $\nu(\PIC)$ and
the number of positive and negative elements in the block containing
$+1$ directly from $\stat$,
but we cannot characterize $\PIC$.
This is because the information in $\stat$ does not tell apart
two noncrossing partitions $\PIC_1$ and $\PIC_2$ with identical statistics
$a$, $\mu$ and $\nu$ but such that $\xi(\PIC_1)=\xi_{{\rm inv}}(\PIC_2)$ and
$c(\PIC_1)=c_{{\rm inv}}(\PIC_2)$, $\PIC_1$ and $\PIC_2$ have the 
same statistics $\stat$. In particular 
$\vartheta_{+1}^*(\PIC_1)=\vartheta_{+1}^*(\PIC_2)$ and
$\vartheta_{-1}^*(\PIC_1)=\vartheta_{-1}^*(\PIC_2)$.  
If additionally we require that the element 
with smallest 
absolute value $>1$ in 
the block containing $+1$ 
changes sign from $\AAC$ to $\PIC$, then this would
be Theorem~\ref{th:2.6}. This extends to all 
cases and types in the following way. For a
joint block $S_i$ of $\AAC$ with more than one 
positive element, we require that the element with 
smallest nonminimal absolute value in $S_i$ and the equivalent element in
its image block $S'_i$ of $\PIC$ have opposite signs. 
This new requirement is
simply a necessary condition for noncrossing (or nonnesting) bump diagrams
in all other cases and it was discussed in the proof
of Theorem~\ref{th:ir2.4}, so there is no loss or change in the previous
analysis if we consider it as being part of the bijections. 
However this is tantamount to requiring that
for any such block $S_i$, the product of the first two nonzero components in
$\myfas$ and $f(S'_i)$ is not equal. Clearly
$S_i$ satisfies $\numnegones \myfas>1$ and $\numones \myfas>0$ and
these inequalities are equivalent to the condition imposed on $S_i$.

Therefore, if we can prove that 
$\Omega_x=\beta$ in the general case where $x$ is a nonnesting or
noncrossing partition of $W$,  
we will be done. 

Suppose $x$ is a noncrossing or nonnesting
partition and we are on step $i$ in the construction of $\Omega_x$.
We have $e_i$ and we want to obtain 
$u_i\in\pmtupleset\cap\Fix(x)$ with $\absdiff{u_i-e_i}$
$\llex$-minimal. 

Consider the case when $i$ belongs to a zero block
of $\AAC$. This means that ${\rm \pi}^i\left(\Fix(x)\right)=\left\{0\right\}$
where ${\rm \pi}^i$ is the canonical projection on the $i$-th
coordinate. If $v$ belongs to $\pmtupleset\cap\Fix(x)$
then $\absdiff{v-e_i}\mathrel{\geq_{{\rm lex}}}e_i$ because
$\absdiff{v-e_i}_i=1$. Hence $u_i=0$ and $u_i$ does not
enter $\Omega_x$.

Now consider when $i$ does not belong to a zero block.
We first prove the uniqueness of $u_i$. Suppose there exist
two vectors $u_i$ and $u'_i$ such that $\absdiff{u_i-e_i}$ and
$\absdiff{u'_i-e_i}$ are $\llex$-minimal. This implies
$\absdiff{u_i-e_i}=\absdiff{u'_i-e_i}$ but then 
$(u_i)_i=(u'_i)_i$ and $|(u_i)_j|=|(u'_i)_j|$ for $j\neq i$. 
The $\llex$-minimality condition
implies that $u_i=u'_i=0$ or $(u_i)_i=(u'_i)_i=1$. 
Suppose $(u_i)_i=(u'_i)_i=1$ holds and suppose 
$(u_i)_j=-(u'_i)_j$ for some $j\neq i$. Then 
$(u_i+u'_i)/2$ belongs to $\pmtupleset\cap\Fix(x)$
and $\absdiff{(u_i+u'_i)/2-e_i}\llex\absdiff{u_i-e_i}=
\absdiff{u'_i-e_i}$, a contradiction. Thus $u_i$ is unique. 
Again, the $\llex$-minimality condition implies
$u_i=0$ or $(u_i)_i=1$. If $u_i=0$ then it does not
enter $\Omega_x$. Suppose there exists some $j<i$ such that
$(u_i)_j\neq 0$. In this case $\absdiff{0-e_i}\llex\absdiff{u_i-e_i}$
and we obtain a contradiction. Therefore, if $u_i$ enters $\Omega_x$ then
$(u_i)_j=0$ for $j<i$ and $(u_i)_i=1$. The $\llex$-minimality
condition shows that actually $u_i$ is the vector in 
$\pmtupleset\cap\Fix(x)$ with the least number of nonzero components
such that $(u_i)_j=0$ for $j<i$ and $(u_i)_i=1$. 
Now $u_i$ satisfies these conditions
if and only if $i$ is the least nonzero
component of $f(B)$ for some nonzero block $B$ of $\AAC$
so that $u_i=f(B)$ or $u_i=-f(B)$ (according to whether
$B$ is nonswitching or switching, respectively).
But the sets 
$$S_1,\dots,S_{k'},C_1,\dots,C_{m'}$$ 
are pairwise disjoint and their minimal positive elements 
are all different, and $u_i$ (or $-u_i$) always enters $\Omega_x$,
so we obtain a correspondence between the elements of $\Omega_x$ and
$\beta$.

\end{proof}

\end{document}